\newtheorem{thm}{Theorem}[section]
\newtheorem{theorem}[thm]{Theorem}
\newtheorem{lemma}[thm]{Lemma}
\newtheorem{corollary}[thm]{Corollary}
\newtheorem{claim}[thm]{Claim}
\newtheorem{fact}[thm]{Fact}
\newtheorem{remark}[fact]{Remark}
\newtheorem{definition}[thm]{Definition}
\newtheorem{construction}[thm]{Construction}
\newtheorem{example}[thm]{Example}
\newcommand\bb[1]{{\mathbf{#1}}}
\newcommand\lat{ {\mathcal L} }
\newcommand\B{ {\mathcal B} }
\newcommand\x{ {{\vec x}} }
\newcommand\R{\mathbb{R}}
\newcommand\Z{\mathbb{Z}}
\newcommand\N{\mathbb{N}}
\newcommand\Q{\mathbb{Q}}
\newcommand\F{\mathbb{F}}
\newcommand\set[1]{{\left\{ #1 \right\}}}
\newcommand\sett[2]{\left\{ #1 \;\middle\vert\; #2 \right\}}
\newcommand\norm[1]{\left\| #1 \right\|}
\newcommand\half{{\frac{1}{2}}}
\newcommand\defeq{\stackrel{def}{=}}
\newcommand\eps{\varepsilon}
\newcommand\spn{span}
\newcommand\parentheses[1]{{\left({#1}\right)}}
\newcommand\ps[1]{\parentheses{#1}}
\newcommand\abs[1]{{\left\lvert{#1}\right\rvert}}
\renewcommand{\vec}[1]{\ensuremath{\boldsymbol{#1}}}
\renewcommand{\spn}{\mathrm{Span}}
\title{On the Shortest Lattice Vector vs. the Shortest Basis}
\author{Yael Eisenberg\thanks{Cornell University. This material is based upon work supported by the National Science Foundation Graduate
Research Fellowship under Grant No. DGE - 2139899. email: \url{ye45@cornell.edu}} \and Itamar Rot\thanks{School of Computer Science, Tel Aviv University. Supported by the European Research Council (ERC) under the European Unions Horizon 2020 research and innovation programme (Grant agreement No. 835152). email: \url{itamarrot@mail.tau.ac.il}} \and Muli Safra\thanks{School of Computer Science, Tel Aviv University. Supported by the European Research Council (ERC) under the European Unions Horizon 2020 research and innovation programme (Grant agreement No. 835152) and by the Israel Science Foundation (ISF) grant 2257/21. email: \url{safra@mail.tau.ac.il}}}
\date{}
\begin{document}
\setstretch{1.5}

\maketitle
 
\begin{abstract}
Given an arbitrary basis for a mathematical lattice, to find a ``good" basis for it is one of the classic and important algorithmic problems.
In this note, we give a new and simpler proof of a theorem by \cite{regavim2021minkowski}:
we construct a 18-dimensional  lattice that does not have a basis that satisfies the following two properties simultaneously:
\begin{enumerate}
    \item The basis includes the shortest non-zero lattice vector.
    \item The basis is shortest, that is, minimizes the longest basis vector 
    (alternatively: the sum or the sum-of-squares of the basis vectors).
\end{enumerate}
The vectors' length can be measured in any $\ell^q$ norm, for $q\in \N_+$ (albeit, via another lattice, of a somewhat larger dimension).
\end{abstract}

\newpage
\section{Introduction}

Broadly, a lattice is defined as a discrete subgroup of a topological group.\footnote{Moreover, an invariant and finite measure of the quotient group must exist.}
Here, we are interested in discrete subgroups of $\R^n$. 
Such a subgroup $\lat$ can be described as the integer linear combination of a collection of linear independent vectors $\vec{v}_1,\ldots,\vec{v}_k \in \R^n$: 
\[\lat={\spn}_{\Z}(\vec{v}_1,\ldots,\vec{v}_k)\defeq\sett{\sum_{i=1}^k n_i \vec{v}_i}{n_1,\ldots,n_k \in \Z}.\]
The collection $\{\vec{v}_i\}$ is a \emph{basis} of the lattice $\lat$. 

Historically, lattices have been investigated since the late 18th century by great mathematicians including Lagrange, Gauss, and Minkowski~\cite{minkowski1910geometrie} (mostly for number theoretic applications).

In the late 20th century, lattices became a hot topic of transformative  research with regards to the complexity of solving related computational problems.
Research in the area has thrived in several directions. One of those is the quest for algorithms to solve computational problems over lattices as efficiently as possible. 

The fundamental mathematical principles involved are crucial in various distinct fields.
The theorems proven by Minkowski~\cite{minkowski1896geometrie}, more than a century ago, and the research program he laid out, continue to be very relevant.
Some aspects affect the complexity of computational problems over lattices, in particular, their average-case complexity, with natural applications to cryptography.


Note that a lattice may be represented by numerous bases.
From a computational perspective,  a lattice is described by a particular basis and the complexity of the computational problem depends fundamentally on the specific basis given as input.
If the given basis is close to orthogonal, problem such as SVP could be computed efficiently, while, given an arbitrary basis, the problem may turn out to be infeasible to solve efficiently.


In other words, the hardness of some computational problems over lattices stems from the hardness of finding a `good' basis given an arbitrary one.
The problem of finding the best lattice basis is well-studied (see~\cite{lll, micciancio2002complexity}), as well as finding a short lattice vector/a collection of short lattice vectors (see~\cite{aks, Ajtai1, pohst1981computation} just for starters).

\subsection{Best Lattice Basis}
There are few distinct (not necessarily equivalent) plausible definitions for the \emph{best basis} for a lattice;
these formulate different manners by which to assert that the basis is either \emph{short} or close as possible to being \emph{orthogonal}.
Several options have been proposed for how to define what it means for a basis to be short.
For instance, it could be the length of the longest basis vector, the sum of lengths, or the product of lengths.

Consider the following \emph{greedy} algorithm (not necessarily efficient) to find such a basis:
start with the shortest vector in the lattice, and then add the next shortest vector, albeit, one that is linearly independent of previous basis vectors (that is, not in the span of the current partial basis), etc.
Those vectors achieve, in terms of norms, the \emph{successive minima} (see \Cref{definition: lambdas}). 
Nevertheless, the successive minima do not necessarily form a basis for the lattice;
in fact, there are canonical examples of lattices such as \Cref{example:non-standard} that have no basis that achieves the successive minima: in these cases, the successive minima spans only a sub-lattice.

In this paper, we completely characterize \emph{non-standard} lattices; 
that is, lattices that have no basis which achieves the successive minima. Informally, we prove that any \emph{non-standard lattice} is a "faulty'' \emph{standard lattice}. Such characterization could significantly improve our understanding of non-standard lattices.
Moreover, they may facilitate improved techniques regarding computability issues.

Let $\lat$ be a non-standard lattice.
Take any sub-lattice $\lat_{\mathrm{std}}\subset \lat$ that has a basis whose vectors' length agree with the successive minima of $\lat$---the lattice spanned by these vectors is a standard sub-lattice of $\lat$.
Observe that it is maximal, in the sense that any other \emph{sub-standard} lattice may not have smaller successive minima.

Then, the lattice $\lat$ is a product of the standard lattice $\lat_{\mathrm{std}}$ with a rational matrix.
Specifically, select a subset  of the vectors in $\lat_{\mathrm{std}}$, divide them each by an arbitrary natural number and add the scaled vectors to the lattice $\lat_{\mathrm{std}}$. $\lat$ will be the additive closure of $\lat_{\mathrm{std}}$ and the scaled vectors (namely, the minimal lattice containing all):
see \Cref{theorem: Characterization of non-standard lattices}.

\subsection{Best Lattice Basis: Revisited}

Once it has been established that the most natural greedy process (which collects vectors that achieve the successive minima) might fail, one considers other greedy processes for finding a short basis.

Here is an even more fundamental question: does every lattice have a basis that includes the shortest vector?

The answer is clearly: `Yes'---the Korkine–Zolotarev (KZ) basis is one that includes the shortest lattice vector \cite{kz}. 
In general, any lattice vector that is not an integer multiple of another lattice vector,
can be completed into a basis of the lattice.

Let the shortest basis be the lattice basis which minimizes the longest basis vector. Consider the following more complex question:
will the shortest basis always include the shortest vector?
(The KZ process produce a basis that might be $O(\sqrt{n})$-approximation of the shortest basis).

The answer, somewhat surprisingly, established in~\cite{regavim2021minkowski}, is `No'.

\begin{theorem}[\cite{regavim2021minkowski}] \label{thm:shvo}
    There is a lattice $\lat \subset \R^{43}$ such that no basis $M=\left[\vec{v}_1,\ldots,\vec{v}_{43}\right]$ of $\lat$ can satisfy the following two properties simultaneously: 
    \begin{itemize}
        \item $M$ includes a shortest lattice vector: there exists an $i$ such that $\norm{\vec{v}_i}_2 = \lambda_1(\lat)$.
        \item $M$ is the shortest basis of $\lat$, that is, $M$ minimizes the longest basis vector among all bases of $\lat$. 
    \end{itemize}
\end{theorem}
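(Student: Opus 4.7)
The plan is to construct an explicit lattice $\lat$ of the claimed dimension and verify its two properties, building on the characterization of non-standard lattices (\Cref{theorem: Characterization of non-standard lattices}). The central idea is to engineer $\lat$ so that there is a ``natural'' basis $\{\vec{c}_1,\ldots,\vec{c}_n\}$ of maximum norm $R$ that does not contain the unique shortest vector $\vec{s}$ (of length $r\ll R$), and that every unimodular change of basis introducing $\vec{s}$ must strictly inflate the maximum norm beyond $R$.

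\textbf{Construction and the clean-swap obstruction.} I would define $\lat$ as the additive closure of a standard sub-lattice $\lat_{\mathrm{std}}=\Z\vec{u}_1+\cdots+\Z\vec{u}_n$ (with each $\|\vec{u}_i\|=R$) together with a collection of fractional vectors $\vec{v}_j=\tfrac{1}{p_j}\sum_{i\in S_j}\vec{u}_i$, for carefully chosen subsets $S_j\subseteq[n]$ and primes $p_j$, in the manner provided by \Cref{theorem: Characterization of non-standard lattices}. The short vector $\vec{s}$ arises as an integer combination of these generators, and one exhibits a ``natural'' basis $\{\vec{c}_1,\ldots,\vec{c}_n\}$ of $\lat$ assembled from some of the $\vec{u}_i$'s together with some of the $\vec{v}_j$'s, all of length $\le R$. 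The combinatorial pattern of the $S_j$'s is designed so that, when $\vec{s}$ is expressed in this basis as $\vec{s}=\sum_i d_i\vec{c}_i$, every nonzero coefficient satisfies $|d_i|\geq 2$. This rules out the ``clean swap'' $\vec{c}_i\mapsto\vec{s}$ that would otherwise produce an $\vec{s}$-containing basis of the same maximum norm $R$.

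\textbf{Ruling out non-clean substitutions and main obstacle.} For arbitrary unimodular substitutions, the argument is geometric. Any basis of $\lat$ containing $\vec{s}$ has the form $(\vec{c}_1\,|\,\cdots\,|\,\vec{c}_n)\,U$ for some integer unimodular matrix $U$ whose first column is $(d_1,\ldots,d_n)^\top$. The condition $\det U=\pm1$, combined with the Bezout constraints imposed by the largeness of the $d_i$'s and the specific angles between the $\vec{c}_i$'s dictated by the construction, forces at least one remaining column of $U$ to produce a vector $\vec{b}_j=\sum_i U_{ij}\vec{c}_i$ of norm strictly greater than $R$. The technical heart of the proof—and the principal obstacle—lies in simultaneously enforcing the algebraic condition ($|d_i|\geq 2$ on the nonzero $d_i$, for \emph{every} basis of max length $R$, not just the exhibited one) and the geometric rigidity (preventing any short replacement column), which is what determines the ambient dimension ($43$ in \cite{regavim2021minkowski}, reducible to $18$ here). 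Extension to general $\ell^q$ norms is then handled separately, via an analogous construction in a somewhat larger dimension, as indicated in the abstract.
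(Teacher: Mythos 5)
Your high-level strategy matches the paper's: build $\lat$ as a scaled integer grid plus fractional vector(s), as suggested by the characterization of non-standard lattices, and arrange for the shortest vector to have coefficients of absolute value at least $2$ in the natural short basis. But the proposal leaves unresolved the two technical points that actually carry the proof, and the mechanism you propose for ruling out non-clean substitutions is not what is needed. First, adding a fractional vector to $p\Z^n$ can silently introduce an unexpectedly short vector; the paper's own cautionary example is that adding $(2,\ldots,2,1)$ to $p\Z^n$ with $p=\sqrt{4n-3}$ creates $(1,\ldots,1,\tfrac{-p+1}{2})$, of length about $p/\sqrt 2 < p$. This is the job of Claim~\ref{thm:A short vector modulo prime}: take $\vec{u}=(1,\boldsymbol{\sigma})$ with $\boldsymbol{\sigma}\in\{2,3\}^{n-1}$ chosen so that for every $k\not\equiv 0,\pm1\pmod p$ the norm of $k\boldsymbol{\sigma}\bmod p$ strictly exceeds that of $\boldsymbol{\sigma}\bmod p$; the $2$/$3$ split is used precisely because $|2k|_p$ and $|3k|_p$ cannot both be small simultaneously. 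Your multi-prime, multi-subset construction has no analogous control over which combinations become short, so you would need an unstated lemma of this kind before anything else goes through.

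Second, stage~I of the construction still yields $2(n+1)$ tied shortest vectors $\pm W_1,\ldots,\pm W_n,\pm\vec{u}$; your plan never breaks the tie. The paper appends an $(n{+}1)$-st coordinate with tiny ordered perturbations ($\eps$, $2\eps$, \dots), so that $\vec{v}_1=(W_1,\eps)$ becomes the \emph{unique} shortest vector while the finite set $\mathcal{S}$ of perturbed generators stays strictly shorter than every other lattice vector (Lemma~\ref{prime lat}). That length gap is what makes the final step finite and combinatorial, and it is exactly what replaces your ``geometric rigidity'' argument: a minimal-max-norm basis can only draw from $\mathcal{S}$; it must include $\tilde{\vec{u}}$ (the first coordinates of $\vec{v}_1,\ldots,\vec{v}_n$ all lie in $p\Z$); hence it is $M^{(i)}=\{\vec{v}_k : k\neq i\}\cup\{\tilde{\vec{u}}\}$ for some $i$; and only $i=1$ spans the lattice, since for $i\ge 2$ the $i$-th coordinate of every vector of $M^{(i)}$ is a multiple of $\sigma_i\in\{2,3\}$ while the lattice contains a vector with $i$-th coordinate $p$. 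No analysis of arbitrary unimodular $U$, Bezout constraints, or ``angles between the $\vec{c}_i$'' is required — and without the length gap such an analysis is precisely the unresolved ``principal obstacle'' you flag. As written, your plan has the right frame but omits both lemmas that make it close.
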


We extend this theme and exhibit a family of constructive lattices for which the shortest basis never includes the shortest vector.
Interestingly, the lattices we present show the same statement for general norms, namely, any $\ell^q$ norm for any $q\in\N$.

We further extend this statement and prove that the same holds when the basis is minimal with respect to the \emph{sum} of the basis vectors' lengths;
it still applies to many other ``reasonable" manners to measure the basis length.

\begin{theorem} \label{thm:main}
    Let $p,q \in \N$.
    There is a lattice $\lat \subset \R^n$ such that no basis $M=\left[ \vec{v}_1,\ldots,\vec{v}_n \right]$ of $\lat$ satisfies the following two properties simultaneously: 
    \begin{itemize}
        \item $M$ includes a shortest lattice vector: $\norm{\vec{v}_i}_q = \lambda_1^{(q)}(\lat)$.
        \item $M$ is the shortest basis of $\lat$, namely, it minimizes the following function over  all bases of $\lat$:
        \[ \sum_{i=1}^n \norm{\vec{v}_i}_q^p \]
        or, if $p=\infty$, 
        \[ \max_{1\leq i\leq n} \norm{\vec{v}_i}. \]
    \end{itemize}
\end{theorem}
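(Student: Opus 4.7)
The plan is to exhibit, for each pair $(p,q)$ with $p\in\N\cup\set\infty$ and $q\in\N_+$, a lattice $\lat\subset\R^n$ together with (i) a unique (up to sign) shortest vector $v^\star$ of $\lat$ and (ii) a witness basis $B^\star$ of $\lat$ with $v^\star\notin\pm B^\star$ whose cost $\sum_i \norm{b_i}_q^p$ (or $\max_i\norm{b_i}_q$ when $p=\infty$) is strictly smaller than the same cost of every basis $B\ni\pm v^\star$. I would first treat the $q=2$, $p=\infty$ case by giving a low-dimensional ($n=18$, as the abstract promises) Gram matrix with the following block structure: a ``core'' block that pins down $v^\star$ as the unique shortest vector, and a ``penalty'' block whose quotient $\lat/\Z v^\star$ has strictly worse successive minima than a carefully chosen sub-lattice $\lat'\subset\lat$ of full rank. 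The heart of this step is a finite case analysis showing that after fixing $v^\star$ as a basis vector, any completion to a basis of $\lat$ is forced to introduce a vector of length at least some threshold $T$, simply because $\lat/\Z v^\star$ no longer contains the ``diagonal'' short vectors that $\lat'$ exploits.

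Next I would lift the construction to arbitrary $q\in\N_+$. The cleanest way to make the arithmetic norm-independent is to choose all the forcing coordinates of the construction so that each relevant basis vector (both in the witness basis $B^\star$ and in the forced long vector appearing in any $v^\star$-containing basis) is supported on a single axis. Then $\norm{\cdot}_q$ equals the absolute value of that coordinate for every $q$, and the comparison between $B^\star$ and any $B\ni v^\star$ reduces to a comparison of integers, independent of $q$. If single-axis support is too restrictive to realize the forcing, I would instead augment $\lat$ by a direct sum with an auxiliary ``amplifier'' lattice $\lat^{\mathrm{amp}}_t$ whose contribution forces the critical vector in any $v^\star$-containing basis to have a coordinate of magnitude at least $t$, while leaving $B^\star$ unaffected; this is the ``somewhat larger dimension'' alluded to in the abstract.

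Finally, to pass from $p=\infty$ to finite $p$, I would note that the comparison between the cost of $B^\star$ and the cost of any $B\ni v^\star$ is governed by a single dominant term: the forced long vector of $\ell^q$-length at least $t$ contributes $t^p$ to $\sum_i\norm{b_i}_q^p$, whereas every vector of $B^\star$ has $\ell^q$-length bounded by some constant $C$ independent of $t$, contributing at most $nC^p$ in total. Taking $t$ with $t^p>nC^p$ yields the strict inequality for every finite $p$ simultaneously, and the $p=\infty$ case follows a fortiori from $t>C$. This monotonicity in $t$ means a single parameter choice handles all $p$ at once once the $q$ dependence is under control.

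The step I expect to be the main obstacle is the forcing argument for the $q=2$ core: one must certify, by inspecting a quotient lattice of rank $n-1$, that no completion of $\set{v^\star}$ to a basis of $\lat$ can simultaneously keep all vectors short. Unlike the verification of $B^\star$, which is a direct computation, this requires ruling out an infinite family of completions, and I would reduce it to a finite check by bounding the coefficients of any candidate completion against $\lambda_1(\lat/\Z v^\star)$ and the putative threshold $T$, then enumerating the finitely many residual cases. Getting the bound $T$ as tight as possible is what keeps the dimension small (e.g.\ $18$ instead of $43$) and what makes the inequality survive the $p$-th-power aggregation for all $p$.
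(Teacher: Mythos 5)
Your high-level plan matches the paper's goal --- find a lattice with a unique (up to sign) shortest vector $v^\star$ and a witness basis $B^\star$ avoiding $v^\star$ that beats every $v^\star$-containing basis --- and your dominant-term observation for passing from $p=\infty$ to finite $p$ is in the same spirit as what the paper does (shrink a noise parameter $\eps$ until the single long forced vector swamps the $\ell^{p}$ aggregation). But the proposal stops short of the one idea that actually makes the forcing work, and the route you sketch for general $q$ has a genuine flaw.

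The paper's core construction is: take the scaled grid $p\Z^n$ ($p$ prime) and adjoin one dense vector $\vec u=(1,\boldsymbol\sigma)$ with $\boldsymbol\sigma\in\set{2,3}^{n-1}$ chosen so that $\norm{\boldsymbol\sigma \bmod p}_q^q=p^q-1$ and, crucially, $\norm{k\boldsymbol\sigma\bmod p}_q>\norm{\boldsymbol\sigma\bmod p}_q$ for every $k\in\Z_p\setminus\set{0,\pm1}$. This is what pins down the shortest vectors of $\lat_+$ to be exactly $\pm p\vec e_1,\dots,\pm p\vec e_n,\pm\vec u$: the ``only $\pm1$ is a short scalar multiple'' property is the whole ballgame, and it relies on the arithmetic interplay of $2$ and $3$ modulo $p$ (if $\abs{2k}_p$ gets small then $\abs{3k}_p$ must get large, and vice versa). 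After a tiny $\eps$-tilt in an added coordinate makes $\vec v_1=(p\vec e_1,\eps)$ the unique shortest vector, any candidate short basis must draw only from $\set{\pm\vec v_1,\dots,\pm\vec v_n,\pm\tilde{\vec u}}$; it must contain $\tilde{\vec u}$ for index/determinant reasons (the sublattice $\lat[\vec v_1,\dots,\vec v_n]$ has only multiples of $p$ in the first coordinate), and then coordinate-wise divisibility forces the dropped vector to be exactly $\vec v_1$. None of this is a ``quotient by $\Z v^\star$ has worse successive minima'' argument, and your plan does not indicate what concrete lattice would make your quotient argument go through; that is precisely the hard step you flag as ``the main obstacle,'' and it is the step the paper's $\set{2,3}$-vector trick solves.

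The direct-sum ``amplifier'' idea for general $q$ does not work as stated. If $\lat^{\mathrm{amp}}_t$ is adjoined as an orthogonal direct summand, then a shortest basis of $\lat\oplus\lat^{\mathrm{amp}}_t$ can always be taken to split across the two summands; a completion of $\set{v^\star}$ never has to touch the amplifier coordinates at all, so the amplifier imposes no penalty on $v^\star$-containing bases. What the paper actually does for general $q$ is keep the same $\set{2,3}$-entry construction: the modular-multiple inequality is provable for every $\ell^q$, and the only price is that the ambient dimension grows with $q$ (one needs $n_1 2^q+n_2 3^q=p^q-1$ with $n_1,n_2$ large enough, so $n\approx (p/3)^q$). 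That is the true source of the abstract's phrase ``a somewhat larger dimension'' --- a number-theoretic constraint, not an added summand. To repair your proposal you would need either a concrete lattice realizing the quotient-minima gap you posit, or to replace the amplifier with a mechanism that genuinely couples to $v^\star$; as written, the crucial forcing step and the generalization to $q\neq 2$ both remain open.
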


In particular, let $q=2$, $p=\infty$.
Applying the method we develop in this paper, we have found the following construction of an $18$-dimensional lattice, which re-proves \Cref{thm:shvo}, albeit with a lattice of smaller dimension. 

{ \scriptsize
\[ \begin{bmatrix*}
19 & 0 & 0 & 0 & 0 & 0 & 0 & 0 & 0 & 0 & 0 & 0 & 0 & 0 & 0 & 0 & 6\\
0 & 19 & 0 & 0 & 0 & 0 & 0 & 0 & 0 & 0 & 0 & 0 & 0 & 0 & 0 & 0 & 5\\
0 & 0 & 19 & 0 & 0 & 0 & 0 & 0 & 0 & 0 & 0 & 0 & 0 & 0 & 0 & 0 & 2\\
0 & 0 & 0 & 19 & 0 & 0 & 0 & 0 & 0 & 0 & 0 & 0 & 0 & 0 & 0 & 0 & 4\\
0 & 0 & 0 & 0 & 19 & 0 & 0 & 0 & 0 & 0 & 0 & 0 & 0 & 0 & 0 & 0 & 7\\
0 & 0 & 0 & 0 & 0 & 19 & 0 & 0 & 0 & 0 & 0 & 0 & 0 & 0 & 0 & 0 & 6\\
0 & 0 & 0 & 0 & 0 & 0 & 19 & 0 & 0 & 0 & 0 & 0 & 0 & 0 & 0 & 0 & 3\\
0 & 0 & 0 & 0 & 0 & 0 & 0 & 19 & 0 & 0 & 0 & 0 & 0 & 0 & 0 & 0 & 3\\
0 & 0 & 0 & 0 & 0 & 0 & 0 & 0 & 19 & 0 & 0 & 0 & 0 & 0 & 0 & 0 & 5\\
0 & 0 & 0 & 0 & 0 & 0 & 0 & 0 & 0 & 19 & 0 & 0 & 0 & 0 & 0 & 0 & 6\\
0 & 0 & 0 & 0 & 0 & 0 & 0 & 0 & 0 & 0 & 19 & 0 & 0 & 0 & 0 & 0 & 3\\
0 & 0 & 0 & 0 & 0 & 0 & 0 & 0 & 0 & 0 & 0 & 19 & 0 & 0 & 0 & 0 & 5\\
0 & 0 & 0 & 0 & 0 & 0 & 0 & 0 & 0 & 0 & 0 & 0 & 19 & 0 & 0 & 0 & 5\\
0 & 0 & 0 & 0 & 0 & 0 & 0 & 0 & 0 & 0 & 0 & 0 & 0 & 19 & 0 & 0 & 2\\
0 & 0 & 0 & 0 & 0 & 0 & 0 & 0 & 0 & 0 & 0 & 0 & 0 & 0 & 19 & 0 & 2\\
0 & 0 & 0 & 0 & 0 & 0 & 0 & 0 & 0 & 0 & 0 & 0 & 0 & 0 & 0 & 19 & 7\\
0 & 0 & 0 & 0 & 0 & 0 & 0 & 0 & 0 & 0 & 0 & 0 & 0 & 0 & 0 & 0 & 1\\
\frac{1}{1000} & \frac{1}{1000} & \frac{1}{1000} & \frac{1}{1000} & \frac{1}{1000} & \frac{1}{1000} & \frac{1}{1000} & \frac{1}{1000} & \frac{1}{1000} & \frac{1}{1000} & \frac{1}{1000} & \frac{1}{1000} & \frac{1}{1000} & \frac{1}{1000} & \frac{1}{1000} & \frac{1}{2000} & \frac{179}{2000\cdot19}
\end{bmatrix*} \] }

These extensions are somewhat stronger than what is established in \cite{regavim2021minkowski}. We believe that our proof is simpler, albeit \emph{de gustibus non disputandum}.

\subsection{Organization}

In \Cref{sec:characterization} we present and prove a characterization theorem of standard and non standard lattices.
In \Cref{sec:construction} we prove \Cref{thm:main}, and describe a lattice with no \emph{``ultimately good"} basis. 
\Cref{sec:construction} comprises of three parts:
in \Cref{subsec:plan} we describe the construction details on a high level;
in \Cref{subsec:stage1} we describe the first stage of the construction, in which we add a unit vector to the grid, without increasing its dimension, or making it too dense.
This construction utilizes a special vector, which is ``short modulo prime". 
The proof of the vector's existence is described in \Cref{sec:A short vector modulo prime}. In \Cref{subsec:fixing} we add some noise to the extended grid, and use the new lattice to complete the proof of \Cref{thm:main}. We discuss possible extensions of our results in \Cref{section:Discussion}.

\section{Preliminaries}

The $\ell^q$ norm of a vector $\vec{x}\in \R^n$ is denoted as $\norm{\vec{x}}_q=\ps{\sum_{i=1}^n \abs{x_i}^q}^\frac{1}{q}$.
The centralized $\ell^q$ ball of radius $r$ is denoted $\B_q(r) \defeq \set{x \in \R^n \mid \norm{\vec{x}}_q < r}$.

We notate the Minkowski sum of sets $\vec{A}+\vec{B}=\set{\vec{x}+\vec{y}\mid \vec{x}\in \vec{A},\vec{y}\in \vec{B}}$ for $\vec{A},\vec{B} \subset \R^n$.

\subsection{$\ell^q$ Norm Modulo}

The grid scaled by a real number  $m$ is denoted $m\Z^n$;
one may measure the distance of a vector from that set of points:
the absolute value of $\alpha \mod m$ is $\abs{\alpha}_m\defeq\min_{z\in \Z} \abs{\alpha-z}$. 

Observe that $\abs{\alpha}_m=\abs{-\alpha}_m$ and $\abs{\alpha}_m = k$ if and only if $\alpha\mod m = \pm k$.

\begin{definition}[$\ell^q$-norm modulo $m$]
The $\ell^q \mod m$ norm of $\vec{x} \in \R^n$ is \[\norm{\vec{x} \mod m}_q=\ps{\sum_{i=1}^n \abs{x_i}_m^q}^\frac{1}{q} \]
namely, it is the $\ell^q$---distance of $\vec{x}$ from $m\Z^n$. 
\end{definition}

Observe that $\norm{\vec{x}}_q \geq \norm{\vec{x} \mod m}_q$, and there is an equality if and only if $\vec{x} \in {\left[-\frac{m-1}{2},\frac{m-1}{2}\right]}^n$.

\subsection{Lattices}

\begin{definition}
A lattice  $\lat$ is a discrete subgroup of $\R^n$.

Equivalently, a matrix $M \in \R^{n\times k}$ with linearly independent columns specifies a lattice $\lat = M\Z^k$. 
Such a matrix is referred to as \emph{a basis of the lattice} and the lattice is then denoted as $\lat[M]$.
$n$ is the \emph{dimension} of the lattice and $k$ is the \emph{rank} of the lattice. A lattice is called \emph{full rank} if $n=k$. We will treat lattices as though they are full rank by implicitly identifying $\spn(\lat)$ with $\R^k$.
\end{definition}

Any lattice of rank greater than $1$ can be generated by infinitely many bases. The following fact specifies exactly when two bases span the same lattice:
\begin{fact}
$M,M'$ are bases of the same lattice, $\lat[M]=\lat[M']$, if and only if there exists $\vec{U} \in \mathrm{GL}_k(\Z)$ such that $M'=MU$.
\end{fact}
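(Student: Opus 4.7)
The plan is to prove the two directions of the biconditional separately, with the forward direction being essentially definitional and the reverse direction relying on the integer invertibility of the change-of-basis matrix.

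For the easy direction ($\Leftarrow$), I would assume $M' = MU$ for some $U \in \mathrm{GL}_k(\Z)$ and show both containments. Since $U$ has integer entries, $U\Z^k \subseteq \Z^k$, so $\lat[M'] = M'\Z^k = MU\Z^k \subseteq M\Z^k = \lat[M]$. Since $U \in \mathrm{GL}_k(\Z)$ means $U^{-1}$ also has integer entries, the symmetric argument applied to $M = M'U^{-1}$ yields the reverse containment.

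For the nontrivial direction ($\Rightarrow$), I would assume $\lat[M] = \lat[M']$. Since every column of $M'$ lies in $\lat[M] = M\Z^k$, each such column is an integer combination of the columns of $M$; collecting coefficients gives a matrix $U \in \Z^{k \times k}$ with $M' = MU$. Symmetrically, there exists $V \in \Z^{k \times k}$ with $M = M'V$. Substituting gives $M = MUV$, i.e., $M(UV - I) = 0$. Because the columns of $M$ are linearly independent, left-multiplication by $M$ is injective on $\R^k$, forcing $UV = I$. Therefore $U$ is invertible over $\Q$ and its inverse $V$ has integer entries, which is precisely the definition of $U \in \mathrm{GL}_k(\Z)$.

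The main (mild) obstacle is the step that extracts $UV = I$ from $M(UV - I) = 0$. One has to use that $M$ has linearly independent columns, not that it is square—so I cannot simply left-multiply by $M^{-1}$. Instead I would either invoke injectivity of the linear map $\x \mapsto M\x$ applied column-by-column, or equivalently multiply on the left by $(M^\top M)^{-1} M^\top$, which exists because $M^\top M$ is a $k \times k$ Gram matrix of linearly independent vectors and hence positive definite. Once $UV = I$ is established, $\det(U)\det(V) = 1$ with both determinants integers forces $\det(U) \in \{\pm 1\}$, confirming $U \in \mathrm{GL}_k(\Z)$.
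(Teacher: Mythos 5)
Your proof is correct. The paper states this fact without proof (it is standard), so there is no in-paper argument to compare against, but your argument is the canonical one: the reverse implication follows from the integrality of $U$ and $U^{-1}$, and the forward implication extracts integer change-of-basis matrices $U$ and $V$ from the mutual containment of the lattices and then uses linear independence of $M$'s columns to get $UV = I$, after which $\det(U)\det(V)=1$ over $\Z$ forces $\det(U) = \pm 1$. Your handling of the non-square case via injectivity of $\vec{x}\mapsto M\vec{x}$ (rather than left-multiplying by $M^{-1}$) is exactly the right care to take.
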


As mentioned above, there are several possible manners to measure the length of a lattice basis. 
In this paper, we use the following definition:
\begin{definition}[Shortest lattice basis]
Let $\lat\subset \R^n$ be a lattice of rank $k\leq n$.
The \emph{length of $\lat$'s shortest basis}, denoted $\overline{\lambda}(\lat)$, is the basis of $\lat$ which minimizes the longest basis vector:
\[\overline{\lambda}^{(q)}(\lat) \defeq \min_{\vec{v}_1,\ldots \vec{v}_k \in \R^{n}\colon 
\lat[\vec{v}_1,\ldots,\vec{v}_k]=\lat} \, \, \max_{1\le i \le k} \norm{\vec{v}_i}_q \]
and any basis of $\lat$ that achieves the minimum length is \emph{a shortest basis of $\lat$}.
\end{definition}

There are other possible manners by which to measure the length of a basis:
\begin{definition}
The \emph{$\ell^{q'}$ length} of a matrix $M = \left[ \vec{v}_1, \ldots, \vec{v}_k \right] \in \R^{n\times k}$ is 
\[\norm{M}_q^{q'} = \ps{\sum_{i=1}^k \norm{\vec{v}_i}_q^{q'}}^\frac{1}{q'} \]
for any $1\leq q' < \infty$.
\end{definition}

Let us establish a simple lower bound on the length of any basis for a particular lattice: 

\begin{definition} \label{definition: lambdas}
The \emph{successive minima} of a lattice are
\[ \lambda_i^{(q)}(\lat) \defeq\inf \set{ r>0 \mid \dim \spn{(\lat \cap \B_q(r))} \geq i} \]
\end{definition}

Observe that $\lambda_1^{(q)}(\lat)$ is the length of the shortest non-zero lattice vector.
Further, observe that a basis of $\lat$ that achieves the successive minima is a shortest basis of $\lat$; this is obtainable provided the lattice is standard as elaborated next.

\begin{fact}[Folklore] \label{fact:linearind}
    For any $q\in\N$, lattice $\lat \subset \R^n$ of rank $k$ there exist linear independent vectors $\vec{u}_1,\ldots,\vec{u}_k \in \lat$ such that $\norm{\vec{u}_i}_q=\lambda_i^{(q)}(\lat)$.
\end{fact}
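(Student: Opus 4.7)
The plan is a greedy inductive construction. I would build $\vec{u}_1,\ldots,\vec{u}_k$ one at a time: take $\vec{u}_1$ to be a shortest non-zero lattice vector, and at step $i>1$ take $\vec{u}_i$ to be a shortest lattice vector not lying in the $(i-1)$-dimensional subspace $W_{i-1} := \spn(\vec{u}_1,\ldots,\vec{u}_{i-1})$. Both minima are attained because $\lat$ is discrete, so $\lat \cap \B_q(r)$ is finite for every $r$; and because $\lat$ has rank $k \geq i$, some lattice vector lies outside $W_{i-1}$, so the minimization is nonempty. From the nesting $W_{j-1} \subseteq W_{i-1}$ for $j \leq i$, the vector $\vec{u}_i$ is itself a valid competitor in each earlier minimization, so the norms are monotone: $\norm{\vec{u}_1}_q \leq \cdots \leq \norm{\vec{u}_i}_q$.

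Once the $\vec{u}_i$ are in hand, I would verify $\norm{\vec{u}_i}_q = \lambda_i^{(q)}(\lat)$ via two inequalities. The easy direction is $\lambda_i^{(q)}(\lat) \leq \norm{\vec{u}_i}_q$: the vectors $\vec{u}_1,\ldots,\vec{u}_i$ are linearly independent by construction and all have norm at most $\norm{\vec{u}_i}_q$, so for every $r > \norm{\vec{u}_i}_q$ the open ball $\B_q(r)$ contains $i$ linearly independent lattice vectors, which by definition forces $\lambda_i^{(q)}(\lat) \leq \norm{\vec{u}_i}_q$.

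The main obstacle will be the reverse inequality $\norm{\vec{u}_i}_q \leq \lambda_i^{(q)}(\lat)$, because the infimum in the definition of $\lambda_i^{(q)}$ is over open balls and \emph{a priori} may fail to be attained by actual lattice vectors. My plan is to upgrade ``$<$'' to ``$\leq$'' by a pigeonhole argument: fix $\epsilon_0 > 0$ and let $S := \lat \cap \B_q(\lambda_i^{(q)}(\lat) + \epsilon_0)$, a finite set by discreteness. For each $\epsilon \in (0,\epsilon_0)$ some $i$-subset of $S$ is linearly independent and sits inside $\B_q(\lambda_i^{(q)}(\lat) + \epsilon)$; because $S$ has only finitely many $i$-subsets, one fixed subset $\set{\vec{w}_1,\ldots,\vec{w}_i}$ works for arbitrarily small $\epsilon$, giving $\norm{\vec{w}_j}_q \leq \lambda_i^{(q)}(\lat)$ for every $j$. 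Since $\vec{w}_1,\ldots,\vec{w}_i$ are linearly independent, at least one of them lies outside the $(i-1)$-dimensional subspace $W_{i-1}$, and that vector is a valid competitor in the minimization defining $\vec{u}_i$; this yields $\norm{\vec{u}_i}_q \leq \lambda_i^{(q)}(\lat)$, completing the induction.
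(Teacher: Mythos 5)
The paper labels this statement as folklore and offers no proof, so there is no in-paper argument to compare against. Your proof is correct. The greedy construction is well-defined because discreteness makes each ball-intersection finite and rank $k\ge i$ ensures the minimization set at step $i$ is nonempty; the easy inequality $\lambda_i^{(q)}(\lat)\le\norm{\vec{u}_i}_q$ follows as you say; and your pigeonhole argument correctly upgrades the infimum (which is taken over \emph{open} balls) to an attained bound, producing a lattice vector of norm at most $\lambda_i^{(q)}(\lat)$ outside the span of $\vec{u}_1,\ldots,\vec{u}_{i-1}$, which closes the induction. A mildly shorter phrasing of that last step: the sets $\lat\cap\B_q(\lambda_i^{(q)}(\lat)+\epsilon)$ for $\epsilon\in(0,\epsilon_0)$ are nested, finite (all contained in your $S$), and each spans a subspace of dimension at least $i$; hence they stabilize as $\epsilon\to 0$ to $\lat\cap\set{\vec{x}:\norm{\vec{x}}_q\le\lambda_i^{(q)}(\lat)}$, which therefore also spans dimension at least $i$ — the same conclusion without tracking a particular $i$-subset.
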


\section{Our characterization of non-standard lattices} \label{sec:characterization}

Once the successive minima are introduced, 
a natural question to ask is whether every lattice has a basis such that the basis vectors achieve the successive minima. 
The answer to this question is a resounding `No':
\begin{example} \label{example:non-standard}\cite{van1956reduktionstheorie}
Let $n \geq 2^q+1$ and consider the $n$-dimensional lattice basis
\[M \defeq 
\begin{pmatrix}1 &  &  & \frac{1}{2}\\
 & \ddots &  & \vdots\\
 &  & 1 & \vdots\\
0 & \cdots & 0 & \frac{1}{2}

\end{pmatrix}\]
Observe that $\pm \vec{e}_1,\ldots,\pm \vec{e}_n$ are the shortest vectors of the lattice $\lat[M]$ and thus $\lambda_1^{(q)}=\ldots=\lambda_n^{(q)} = 1$.
Any basis of $\lat$, however, must include a vector $\vec{v}\in\frac{1}{2}\Z_{odd}^n$, where $\norm{\vec{v}}\geq\half n^\frac{1}{q}>1$. 
\end{example}

\begin{definition}
A lattice $\lat \subset \R^n$ of rank $k$ is \emph{$\ell^q$-standard} if there exists a basis $M=[\vec{v}_1,\ldots,\vec{v}_k]$ of $\lat$ such that for any $1\le i \le k$, $\norm{\vec{v}_i}_q = \lambda_i^{(q)}(\lat)$. 
\end{definition}

Observe that in a standard lattice, $\overline{\lambda}^{(q)}(\lat) = \lambda_n^{(q)}(\lat)$. 
In contrast, the lattice of \Cref{example:non-standard} is not standard, and in fact
\[ \overline{\lambda}^{(q)}(\lat)=\half n^\frac{1}{q}>1= \lambda_n^{(q)}(\lat).\]

We remark that any lattice of dimension $4$ or lower must be $\ell^2$-standard \cite{feng2017standard}. An extension of this result is discussed in \Cref{section:Discussion}. 
Moreover, it is clear that the lattice in \Cref{example:non-standard} is constructed by beginning with $\Z^n$ and adding a new vector, which is a vector of the grid divided by some natural number.
In \Cref{theorem: Characterization of non-standard lattices} we demonstrate the fact that any non-standard lattice is constructed in this manner.
\begin{theorem}[Characterization of non-standard lattices]
\label{theorem: Characterization of non-standard lattices}
Let $\lat\subset\R^k$ be any non-standard full rank lattice, let $q\in\N$; let $W_1,\dots,W_k\in\lat$ be a collection of linearly independent vectors with $\norm{W_i}^{q}=\lambda_i^{(q)}(\lat)$ (those exist by \Cref{fact:linearind}) and let $\lat_{\mathrm{std}}\subset\lat$ be the lattice generated by the $W_i$s, i.e., $\lat_{\mathrm{std}}\defeq\lat[W]$. Then there are vectors $\vec{u}_1,\ldots,\vec{u}_\ell \in \lat_{\mathrm{std}}$ and integers $n_1,\ldots,n_\ell \in\N$ ($\ell \leq k$)
such that 
\[ \lat = \spn_\Z{\ps{{\lat_{\mathrm{std}}} + \set{\frac{\vec{u}_1}{n_1},\ldots,\frac{\vec{u}_\ell}{n_\ell}}}}.\]


\end{theorem}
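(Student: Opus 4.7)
The plan is to exploit the fact that $\lat_{\mathrm{std}}$ is a full-rank sublattice of $\lat$ and then invoke the structure theorem for finitely generated abelian groups on the quotient $\lat/\lat_{\mathrm{std}}$.

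First I would verify that $\lat_{\mathrm{std}}$ has finite index in $\lat$. Since $\lat$ is full rank $k$ and $W_1,\ldots,W_k$ are linearly independent vectors in $\lat$, the sublattice $\lat_{\mathrm{std}} = \lat[W]$ is also of full rank $k$; consequently the quotient group $G \defeq \lat/\lat_{\mathrm{std}}$ is a finitely generated abelian group all of whose elements have finite order, hence finite. Concretely, the inclusion $\lat_{\mathrm{std}} \hookrightarrow \lat$ can be represented, after choosing any basis $B$ of $\lat$, by an integer matrix $A \in \Z^{k\times k}$ (with $W = BA$) whose determinant equals the index $[\lat : \lat_{\mathrm{std}}] = |\det A|$.

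Next I would apply the Smith normal form to $A$, writing $A = P D Q$ with $P, Q \in \mathrm{GL}_k(\Z)$ and $D = \mathrm{diag}(n_1,\ldots,n_k)$. This yields the decomposition
\[ G \;\cong\; \Z/n_1\Z \oplus \cdots \oplus \Z/n_k\Z. \]
Keeping only those indices for which $n_i > 1$ gives at most $\ell \le k$ nontrivial cyclic factors. For each such factor I pick a representative $\vec{v}_i \in \lat$ whose image generates the $i$-th cyclic summand of $G$. By construction $n_i \vec{v}_i \in \lat_{\mathrm{std}}$, so there is some $\vec{u}_i \in \lat_{\mathrm{std}}$ with $n_i \vec{v}_i = \vec{u}_i$, which rearranges to $\vec{v}_i = \vec{u}_i / n_i$.

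Finally, since $\lat_{\mathrm{std}}$ together with any lift of a generating set of $G$ generates $\lat$ as an abelian group, we obtain
\[ \lat \;=\; \spn_\Z\ps{\lat_{\mathrm{std}} \cup \set{\vec{u}_1/n_1, \ldots, \vec{u}_\ell/n_\ell}} \;=\; \spn_\Z\ps{\lat_{\mathrm{std}} + \set{\vec{u}_1/n_1, \ldots, \vec{u}_\ell/n_\ell}}, \]
which is exactly the conclusion. There is no substantive obstacle here: the content of the theorem is essentially the structure theorem for finite abelian groups applied to $\lat/\lat_{\mathrm{std}}$, packaged in lattice language; the bound $\ell \le k$ is automatic from the Smith normal form having at most $k$ diagonal entries, and the vectors $\vec{u}_i$ automatically lie in $\lat_{\mathrm{std}}$ because the $\vec{v}_i$ have finite order modulo $\lat_{\mathrm{std}}$.
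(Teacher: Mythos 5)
Your proof is correct, but it takes a genuinely different route from the paper's. You pass to the finite quotient group $\lat/\lat_{\mathrm{std}}$ and invoke the Smith normal form (equivalently, the structure theorem for finitely generated abelian groups) to split it into cyclic factors $\Z/n_1\Z \oplus \cdots \oplus \Z/n_k\Z$; the generators $\vec{u}_i/n_i$ then come from lifting generators of the nontrivial cyclic summands. The paper's argument is more elementary and does not pass through the quotient at all: it fixes an arbitrary basis $M = [\vec{v}_1,\ldots,\vec{v}_k]$ of $\lat$, observes that the change-of-basis matrix $C$ with $W = MC$ is an invertible integer matrix so $C^{-1}$ is rational, and simply clears denominators row by row to write each $\vec{v}_i = \vec{u}_i/n_i$ with $\vec{u}_i \in \lat_{\mathrm{std}}$ and $n_i$ the lcm of the relevant denominators. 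Both arguments only use that $\lat_{\mathrm{std}}$ is a full-rank sublattice (the ``non-standard'' hypothesis plays no role in the proof, only in the theorem's interest). What your approach buys is canonicity: the $n_i$ you produce are the invariant factors of the quotient, you can drop the factors with $n_i=1$ to make $\ell$ as small as possible, and the cyclic decomposition is intrinsic. What the paper's approach buys is that it needs no machinery beyond ``inverse of an integer matrix is rational,'' at the cost of producing $\ell = k$ vectors whose scalars $n_i$ are not canonical and may be redundant. Either suffices for the stated conclusion.
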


\begin{proof}
Let $W_1,\dots,W_n\in\lat$ be a collection of linearly independent vectors with $\norm{W_i}^{q}=\lambda_i^{(q)}(\lat)$ (those exist by \Cref{fact:linearind}).

Denote $W\defeq[W_1,\ldots,W_n]$ and further denote $\lat_{\mathrm{std}}\defeq\lat[W]$. 
    
The successive minima of a sub-lattice cannot decrease moving from $\lat$ to $\lat_{\mathrm{std}}$, as $\lat_{\mathrm{std}} \subseteq\lat$ and thus $\forall i\in[n]\colon\lambda_i^{(q)}(\lat_{\mathrm{std}})\geq \lambda_i^{(q)}(\lat)$.
On the other hand, $W_1,\ldots, W_n \in \lat_{\mathrm{std}}$ and thus $\lambda_i^{(q)}[\lat_{\mathrm{std}}] \leq \norm{W_i}_q =\lambda_i^{(q)}[\lat]$;
altogether it must be the case that $\lat$ and $\lat_{\mathrm{std}}$ have the same successive minima, $\forall i\in[n]\colon\lambda_i^{(q)}(\lat_{\mathrm{std}})=\lambda_i^{(q)}(\lat)$. 
    
Let $M=\left[\vec{v}_1,\ldots,\vec{v}_k\right]$ be a basis of the original lattice $\lat$. 
Since $W_1,\ldots,W_k \in \lat[M]$, there exist $C_1,\ldots,C_k \in \Z^n$ such that $W_i = MC_i$. 
Denote $C=[C_1,\ldots,C_k] \in \Z^{k \times k}$ and observe that $W=MC$. 
Since $W,M$ are invertible,
so is $C$, and hence $WC^{-1}=M$. 
$C$ is an integer matrix and hence $C^{-1}\in\Q^{n\times n}$. 
We denote $C^{-1}_{ij} = \frac{r_{ij}}{t_{ij}}$, $n_i = \mathrm{lcm}(t_{i1},\ldots,t_{in})$ and $n_i\vec{v}_i = \sum_{k=1}^n \underbrace{\frac{r_{ik} \cdot n_i}{t_{ik}}}_{\in\Z} W_k \in \lat[W] $. It immediately follows that $\vec{v}_i = \frac{1}{n_i}\vec{u}_i$ for a particular $\vec{u}_i\in\lat[W]$ and the theorem is proven.




\end{proof}

\begin{remark}
For the $q=2$ case, one can also require the length of the vectors $\frac{\vec{u}_1}{n_1},\ldots,\frac{\vec{u}_k}{n_k}$ to be at most $\frac{\sqrt{n}}{2}\lambda_n(\lat)$
(using the fact that the distance of any vector in $\R^n$ from $\lat_{\mathrm{std}}$ is at most $\frac{\sqrt{n}}{2}\lambda_n(\lat_\mathrm{std})=\frac{\sqrt{n}}{2}\lambda_n(\lat)$ \cite{micciancio2002complexity}, and that shifting $\frac{\vec{u}_i}{n_i}$ by a vector in $\lat_{\mathrm{std}}$ does not change $ \lat_{\mathrm{std}} + \spn_\Z{\ps{\set{\frac{\vec{u}_1}{n_1},\ldots,\frac{\vec{u}_k}{n_k}}}}$).
\end{remark}

\section{A Construction and its proof} \label{sec:construction}

\subsection{A Man, a Plan, a Canal} \label{subsec:plan}

This section is devoted to a proof of \Cref{thm:Short basis vs. shortest vector} which
demonstrates a lattice for which any short basis cannot include the shortest non-zero lattice vector.

\begin{theorem}[Shortest basis vs. shortest vector]\label{thm:Short basis vs. shortest vector}
For any integer $q\in\N$ and  large enough $n$,
there exists a non-standard lattice $\lat \subset \R^n$, for which, any basis $M=\left[ \vec{v}_1,\ldots,\vec{v}_n\right]$ with $\max_{1\le i \le n} \norm{\vec{v}_i}_q = \overline{\lambda}^{(q)}(\lat)$, must have $\lambda_1^{(q)}(\lat) < \min_{1\le i \le n} \norm{\vec{v}_i}_q $---that is, any shortest basis does not achieve $\lambda_1^{(q)}(\lat)$.
The dimension $n$ of $\lat$ could be as low as 18 for $q=2$.

\end{theorem}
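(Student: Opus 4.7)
The plan is to construct $\lat$ in two stages following the outline in \Cref{subsec:plan}, and then verify the norm dichotomy directly. Fix a prime $p$ (for instance $p=19$ when $q=2$, giving the $n=18$ example). Begin with the scaled integer grid $p\Z^{n-1}\subset\R^{n-1}$. The main technical ingredient, provided by \Cref{sec:A short vector modulo prime}, is a \emph{short-modulo-$p$} vector $\vec{w}\in\Z^{n-1}$: its coordinates all lie in $\set{-(p-1)/2,\ldots,(p-1)/2}$, and $\norm{\vec{w}}_q^q+1$ exceeds $p^q$ by a controlled small amount, so that the vector $\vec{w}+\vec{e}_n\in\R^n$ is in $\ell^q$-norm strictly longer than, but very close to, the grid generator $p\vec{e}_i$. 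In the first stage (\Cref{subsec:stage1}) extend the grid to $\R^n$ by adjoining the generator $\vec{w}+\vec{e}_n$; in the second stage (\Cref{subsec:fixing}) embed into $\R^{n+1}$ and perturb each generator by a small rational noise $\delta_i\vec{e}_{n+1}$. The noise values are tuned both to preserve all norm inequalities up to $o(1)$ and to introduce a divisibility/coset obstruction between the distinguished short hyperplane-generator and the remaining generators.

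Next I would identify the two key quantities. The shortest lattice vector lies in the hyperplane $\set{x_n=0}$: it is the perturbed grid-generator with the distinguished small noise value, of $\ell^q$-norm $\lambda_1^{(q)}(\lat)=(p^q+\delta^q)^{1/q}$ for the smallest noise value $\delta$. On the other hand, every basis of $\lat$ must contain at least one vector with nonzero $n$-th coordinate; the shortest such lattice vector is $\vec{w}+\vec{e}_n+\delta_n\vec{e}_{n+1}$, of $\ell^q$-norm $(\norm{\vec{w}}_q^q+1+\delta_n^q)^{1/q}$, which by the short-modulo-$p$ property strictly exceeds $\lambda_1^{(q)}(\lat)$. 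Consequently $\overline{\lambda}^{(q)}(\lat)=(\norm{\vec{w}}_q^q+1+\delta_n^q)^{1/q}$, realized by the canonical basis consisting of the $n-1$ perturbed grid-generators together with $\vec{w}+\vec{e}_n+\delta_n\vec{e}_{n+1}$.

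The heart of the argument is the dichotomy: every basis $M=[\vec{v}_1,\ldots,\vec{v}_n]$ attaining $\max_i\norm{\vec{v}_i}_q=\overline{\lambda}^{(q)}(\lat)$ must satisfy $\min_i\norm{\vec{v}_i}_q>\lambda_1^{(q)}(\lat)$. The argument separates into two sub-claims. First, exactly one basis vector (say $\vec{v}_n$) has nonzero $n$-th coordinate; by direct enumeration of short vectors with nonzero $n$-th coordinate, $\vec{v}_n$ must be (up to sign) $\vec{w}+\vec{e}_n+\delta_n\vec{e}_{n+1}$, whose $\ell^q$-norm equals $\overline{\lambda}^{(q)}(\lat)$ and is strictly greater than $\lambda_1^{(q)}(\lat)$. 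Second, the remaining $n-1$ basis vectors span the perturbed-grid sublattice $\lat_0:=\lat\cap\set{x_n=0}$; here the divisibility obstruction enforced by the calibrated noise in \Cref{subsec:fixing} prevents the unique shortest vector of $\lat_0$ from appearing in any basis of $\lat_0$ whose remaining vectors are all within the short regime --- including it would force the basis determinant to have the wrong denominator modulo the noise-induced cyclic factor. Hence every basis vector of $M$ has $\ell^q$-norm strictly greater than $\lambda_1^{(q)}(\lat)$, completing the dichotomy.

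The main obstacle is the existence of the short-modulo-$p$ vector $\vec{w}$ with the precise inequality $\norm{\vec{w}}_q^q+1>p^q$ while $\vec{w}$ stays inside the half-open box $\set{-(p-1)/2,\ldots,(p-1)/2}^{n-1}$; this is handled in \Cref{sec:A short vector modulo prime} by a counting argument showing that a suitable $\ell^q$-ball in $\R^{n-1}$ hits enough residue classes modulo $p$. A secondary technical obstacle is the precise calibration of the noise values $\delta_i$ (realized in the explicit matrix as $1/1000$, $1/2000$, and $179/(2000\cdot 19)$) to enforce the divisibility obstruction used in the second sub-claim of the dichotomy; this is carried out in \Cref{subsec:fixing}.
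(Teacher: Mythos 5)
Your high-level outline matches the paper's two-stage plan (Stage I: adjoin a short-mod-$p$ vector to a scaled grid; Stage II: embed into $\R^{n+1}$ and add small noise to single out a unique shortest vector and force it out of any short basis). However, several of the key mechanisms you describe are wrong, and one of them is the crux of the whole argument.

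First, the norm calibration of $\vec u$ is stated incorrectly. You say the short-mod-$p$ vector is chosen so that $\norm{\vec w}_q^q + 1$ \emph{exceeds} $p^q$ ``by a controlled small amount.'' In the paper the whole point of Claim~\ref{thm:A short vector modulo prime} is that one can make $\norm{\boldsymbol{\sigma}}_q^q = p^q - 1$ \emph{exactly}, so that $\vec u = (1,\boldsymbol\sigma)$ has norm $p$ and the $2(n+1)$ vectors $\pm W_1,\dots,\pm W_n,\pm\vec u$ are all shortest vectors of $\lat_+$ \emph{before} the noise is added. It is precisely this tie that the Stage~II noise breaks, turning $\vec v_1$ into the unique shortest vector. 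With $\norm{\vec u}_q > p$ you would have to redo the analysis of which vectors of $\lat_+$ lie in the ball $\B_q(R)$, since $\vec u$ is no longer tied with the grid vectors. Relatedly, you omit the crucial minimality property that Claim~\ref{thm:A short vector modulo prime} actually provides: $\norm{k\boldsymbol{\sigma}\bmod p}_q > \norm{\boldsymbol{\sigma}\bmod p}_q$ for every $k\in\Z_p\setminus\set{0,\pm1}$. Without this you cannot rule out other short vectors in $\lat_+$ arising from higher multiples of $\vec u$ (this is exactly the counterexample given in \Cref{subsec:plan} with $\vec u=(2,\dots,2,1)$).

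Second, and most seriously, you attribute the obstruction preventing $\vec v_1$ from appearing in a shortest basis to ``the divisibility/coset obstruction enforced by the calibrated noise,'' claiming that including $\vec v_1$ ``would force the basis determinant to have the wrong denominator modulo the noise-induced cyclic factor.'' That is not how the argument works and, as written, it does not give a proof. The noise $\eps$ plays only a tie-breaking role for norms. The actual exclusion argument is coordinate-wise and entirely arithmetic over $\Z$: any short basis drawn from $\mathcal{S}=\set{\pm\vec v_1,\dots,\pm\vec v_n,\pm\tilde{\vec u}}$ must include $\tilde{\vec u}$ (otherwise the first coordinates all lie in $p\Z$), hence must be $M^{(i)}=\set{\vec v_k : k\neq i}\cup\set{\tilde{\vec u}}$ for some $i$; and if $i\neq 1$ then every vector of $\lat[M^{(i)}]$ has its $i$-th coordinate in $\sigma_{i}\Z$ (where $\sigma_i\in\set{2,3}$), so $p\vec e_i$, whose $i$-th coordinate is the prime $p>3$, is not in $\lat[M^{(i)}]$. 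Nothing about determinants or denominators modulo a cyclic factor is involved, and the $\set{2,3}$ structure of $\boldsymbol\sigma$, which your proposal never mentions, is what makes this step work.

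Third, your description of the existence proof for $\boldsymbol\sigma$ as ``a counting argument showing that a suitable $\ell^q$-ball hits enough residue classes modulo $p$'' does not match the paper: the actual argument is constructive, writing $p^q-1 = n_1 2^q + n_2 3^q$ via a Frobenius/Chicken-McNugget lemma (Claim~\ref{claim:euclid}) and then doing a short case analysis on $\abs{2k}_p$ and $\abs{3k}_p$.

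So while the scaffolding is right, the load-bearing step (why no shortest basis can contain $\vec v_1$) is replaced by a sketchy mechanism that isn't the paper's and wouldn't work as stated, and the properties of $\boldsymbol\sigma$ that the construction actually relies on (exact norm $p^q-1$, uniqueness-of-shortest-multiple, entries in $\set{2,3}$) are either misstated or absent.
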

The lowest dimension example we found fulfilling \Cref{thm:Short basis vs. shortest vector} is in \Cref{thm:main}.
\noindent
The construction proceeds in two stages:
\begin{enumerate}
\item First, construct a non-standard lattice $\lat_+ \subset \R^n$ of rank $n$, with exactly $2(n+1)$ shortest vectors (of equal length): 
add a vector $\vec{u}$ of norm $p\in\R$ to the scaled integer grid $p\Z^n$.
The only vectors of norm $p$ in $\lat_+$ (up to sign) are $W_1=p\cdot \vec{e}_1,\ldots,W_n=p\cdot \vec{e}_n,\vec{u}$. 
\item Construct a new lattice $\tilde{\lat}$, by adding a coordinate to the $W_i$ vectors to create $\tilde{W}_i$ vectors, such that $\norm{\tilde{W}_1},\ldots,\norm{\tilde{W}_n},\norm{\tilde{\vec{u}}} \approx p$.
The additional coordinate will be added in a manner that forces $\tilde{\vec w}_1$ to be the only shortest lattice vector. We will prove that the shortest basis of $\tilde{\lat}$ does not include $\tilde{\vec w}_1$.
\end{enumerate}

\begin{figure}[!ht]
    \centering
    \tdplotsetmaincoords{60}{125}
    \tdplotsetrotatedcoords{0}{0}{0} 
    \begin{subfigure}[ht]{0.4\textwidth}
        \centering
        \resizebox{\linewidth}{!}{
            \begin{tikzpicture}[scale=1.7,tdplot_rotated_coords,
                    cube/.style={very thick,black},
                    grid/.style={very thin,gray},
                    axis/.style={-stealth,darkgray, thick},
                    rotated axis/.style={->,purple,ultra thick}]

    \foreach \x in {-0.5,0,...,2.5}
        \foreach \y in {-0.5,0,...,2.5}
        {
            \draw[grid] (\x,-0.5) -- (\x,2.5);
            \draw[grid] (-0.5,\y) -- (2.5,\y);
        }

    \draw[axis,tdplot_main_coords] (0,0,0) -- (3,0,0) node[anchor=west]{$\,x$};
    \draw[axis,tdplot_main_coords] (0,0,0) -- (0,2.9,0) node[anchor=north west]{$y$};
    \draw[axis,tdplot_main_coords] (0,0,0) -- (0,0,2.5) node[anchor=west]{$z$};


    \draw[cube,fill=blue!5,opacity=0.7] (0,0,0) -- (0,2,0) -- (2,2,0) -- (2,0,0) -- cycle;

    \draw[cube,fill=red!5,opacity=0.7] (0,0,0) -- (0,2,0) -- (0,2,2) -- (0,0,2) -- cycle;

    \draw[cube,fill=green!5,opacity=0.7] (0,0,0) -- (2,0,0) -- (2,0,2) -- (0,0,2) -- cycle;

\foreach \x in {0,1,2}
   \foreach \y in {0,1,2}
      \foreach \z in {0,1,2}{
           \ifthenelse{  \lengthtest{\x pt < 2pt}  }
           {
                \draw [black]   (\x,\y,\z) -- (\x+1,\y,\z);
           }
           {
           }
           \ifthenelse{  \lengthtest{\y pt < 2pt}  }
           {
                \draw [black]   (\x,\y,\z) -- (\x,\y+1,\z);
           }
           {
           }
           \ifthenelse{  \lengthtest{\z pt < 2pt}  }
           {
                \draw [black]   (\x,\y,\z) -- (\x,\y,\z+1);
           }
           {
           }
           \shade[rotated axis,ball color = purple!80] (\x,\y,\z) circle (0.06cm);
}
            \node at (1.3,0.05,0.35) {$ W_1$};
            \node at (0.7,1.5,0.3) {$W_2$};
            \node at (0.9,0.4,1.65) {$ W_3$};
\end{tikzpicture}
         }
        \caption{We scale the integer lattice $\Z^n$ by a prime $p$.  }
        \label{fig:subfig8}
    \end{subfigure}
    \hspace*{\fill}
    \begin{subfigure}[ht]{0.4\textwidth}
    \centering
        \resizebox{\linewidth}{!}{
            \begin{tikzpicture}[scale=1.7,tdplot_rotated_coords,
                    cube/.style={very thick,black},
                    grid/.style={very thin,gray},
                    axis/.style={-stealth,darkgray, thick},
                    rotated axis/.style={->,purple,ultra thick}]

    \foreach \x in {-0.5,0,...,2.5}
        \foreach \y in {-0.5,0,...,2.5}
        {
            \draw[grid] (\x,-0.5) -- (\x,2.5);
            \draw[grid] (-0.5,\y) -- (2.5,\y);
        }

    \draw[axis,tdplot_main_coords] (0,0,0) -- (3,0,0) node[anchor=west]{$\,x$};
    \draw[axis,tdplot_main_coords] (0,0,0) -- (0,2.9,0) node[anchor=north west]{$y$};
    \draw[axis,tdplot_main_coords] (0,0,0) -- (0,0,2.5) node[anchor=west]{$z$};


    \draw[cube,fill=blue!5,opacity=0.7] (0,0,0) -- (0,2,0) -- (2,2,0) -- (2,0,0) -- cycle;

    \draw[cube,fill=red!5,opacity=0.7] (0,0,0) -- (0,2,0) -- (0,2,2) -- (0,0,2) -- cycle;

    \draw[cube,fill=green!5,opacity=0.7] (0,0,0) -- (2,0,0) -- (2,0,2) -- (0,0,2) -- cycle;

\foreach \x in {0,1,2}
   \foreach \y in {0,1,2}
      \foreach \z in {0,1,2}{
           \ifthenelse{  \lengthtest{\x pt < 2pt}  }
           {
                \draw [black]   (\x,\y,\z) -- (\x+1,\y,\z);
           }
           {
           }
           \ifthenelse{  \lengthtest{\y pt < 2pt}  }
           {
                \draw [black]   (\x,\y,\z) -- (\x,\y+1,\z);
           }
           {
           }
           \ifthenelse{  \lengthtest{\z pt < 2pt}  }
           {
                \draw [black]   (\x,\y,\z) -- (\x,\y,\z+1);
           }
           {
           }
           \shade[rotated axis,ball color = purple!80] (\x,\y,\z) circle (0.06cm);  
}
\foreach \x in {-1,0}
   \foreach \y in {-1,0}
      \foreach \z in {-1,0,1}{
            \ifthenelse{  \lengthtest{\x pt < 0pt}  }
            {
            \draw [gray] [dashed]  (0.53+\x,0.8+\y,0.26+\z) -- (1.53+\x,0.8+\y,0.26+\z);
            }
            {}
            \ifthenelse{  \lengthtest{\y pt < 0pt}  }
            {
            \draw [gray] [dashed]  (0.53+\x,0.8+\y,0.26+\z) -- (0.53+\x,1.8+\y,0.26+\z);
            }
            {}
            \ifthenelse{  \lengthtest{\z pt < 1pt}  }
            {
            \draw [gray] [dashed]  (0.53+\x,0.8+\y,0.26+\z) -- (0.53+\x,0.8+\y,1.26+\z);
            }
            {}
            \shade[rotated axis,ball color = green!80] (0.53+\x,0.8+\y,0.26+\z) circle (0.06cm) ;  
            \node at (1.3,0.05,0.35) {$W_1$};
            \node at (0.7,1.5,0.3) {$ W_2$};
            \node at (0.9,0.4,1.65) {$ W_3$};
            \node at (0.4,0.8,0.4) {$\bb u$};
}
\end{tikzpicture}
        }
        \caption{We add a certain vector of the form $\vec{u} = (1,2,...,2,3,...,3)$,
        of length $p$ to the scaled grid $p\Z^n$, and get the lattice $\lat_+$. We choose a vector $\vec{u}$ such that all the (non-zero) vectors in the lattice will be of length at least $p$.}   
        \label{fig:subfig9}
    \end{subfigure}

    \begin{subfigure}[b]{0.6\textwidth}
        \centering
        \resizebox{\linewidth}{!}{
            \begin{tikzpicture}[scale=1.7,tdplot_rotated_coords,
                    cube/.style={very thick,black},
                    grid/.style={very thin,gray},
                    axis/.style={-stealth,darkgray, thick},
                    rotated axis/.style={->,purple,ultra thick}]

    \foreach \x in {-0.5,0,...,2.5}
        \foreach \y in {-0.5,0,...,2.5}
        {
            \draw[grid] (\x,-0.5) -- (\x,2.5);
            \draw[grid] (-0.5,\y) -- (2.5,\y);
        }

    \draw[axis,tdplot_main_coords] (0,0,0) -- (3,0,0) node[anchor=west]{$\,x$};
    \draw[axis,tdplot_main_coords] (0,0,0) -- (0,2.9,0) node[anchor=north west]{$y$};
    \draw[axis,tdplot_main_coords] (0,0,0) -- (0,0,2.5) node[anchor=west]{$z$};


    \draw[cube,fill=blue!5,opacity=0.5,draw opacity=0.34] (0,0,0) -- (0,2,0) -- (2,2,0) -- (2,0,0) -- cycle;

    \draw[cube,fill=red!5,opacity=0.5,draw opacity=0.34] (0,0,0) -- (0,2,0) -- (0,2,2) -- (0,0,2) -- cycle;

    \draw[cube,fill=green!5,opacity=0.5,draw opacity=0.34] (0,0,0) -- (2,0,0) -- (2,0,2) -- (0,0,2) -- cycle;

\foreach \x in {0,1,2}
   \foreach \y in {0,1,2}
      \foreach \z in {0,1,2}{
           \ifthenelse{  \lengthtest{\x pt < 2pt}  }
           {
                \draw [black]   (1.03*\x+0.02*\y,1.02*\y+0.03*\z,0.96*\z+0.01*\x+0.1*\y) -- (1.03*\x+1.03+0.02*\y,1.02*\y+0.03*\z,0.96*\z+0.01*\x+0.1*\y+0.01);
           }
           {
           }
           \ifthenelse{  \lengthtest{\y pt < 2pt}  }
           {
                \draw [black]   (1.03*\x+0.02*\y,1.02*\y+0.03*\z,0.96*\z+0.01*\x+0.1*\y) -- (1.03*\x+0.02*\y,1.02*\y+1.02+0.03*\z,0.96*\z+0.01*\x+0.1*\y+0.1);
           }
           {
           }
           \ifthenelse{  \lengthtest{\z pt < 2pt}  }
           {
                \draw [black]   (1.03*\x+0.02*\y,1.02*\y+0.03*\z,0.96*\z+0.01*\x+0.1*\y) -- (1.03*\x+0.02*\y,1.02*\y+0.03*\z+0.03,0.96*\z+0.96+0.01*\x+0.1*\y);
           }
           {
           }
           \shade[rotated axis,ball color = purple!80] (1.03*\x+0.02*\y,1.02*\y+0.03*\z,0.96*\z+0.01*\x+0.1*\y) circle (0.06cm);  

           \shade[rotated axis,ball color = purple!80,opacity=0.2] (\x,\y,\z) circle (0.06cm);  
}
\foreach \x in {-1,0}
   \foreach \y in {-1,0}
      \foreach \z in {-1,0,1}{
            \shade[rotated axis,ball color = purple!80] (0.5485+\x,0.8526+\y,0.2736+\z) circle (0.06cm) ;  
            \shade[rotated axis,ball color = purple!80,opacity=0.3] (0.53+\x,0.8+\y,0.26+\z) circle (0.06cm) ;  
            \node at (1.3,0.1,0.35) {$\tilde{W}_1$};
            \node at (0.7,1.5,0.45) {$\tilde{W}_2$};
            \node at (0.9,0.4,1.55) {$\tilde{W}_3$};
            \node at (0.53,0.82,0.1) {$\tilde{\bb u}$};
}
\end{tikzpicture}
        }
        \caption{We add "linear" noise to the lattice vectors and get a new vector $\tilde{\lat}$. We add another coordinate to the basis vectors and put little values in them. We add a small enough noise such that there won't be any new small vectors in the lattice.}
        \label{fig:subfig10}
    \end{subfigure}
\caption{A Sketch of the Construction} 
\label{fig:subfig1.a.4}
\end{figure}
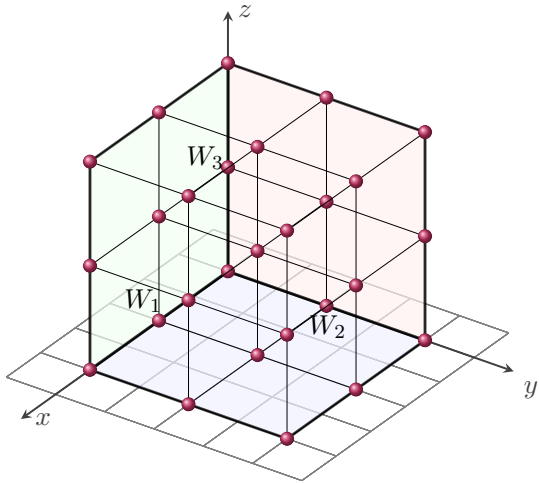
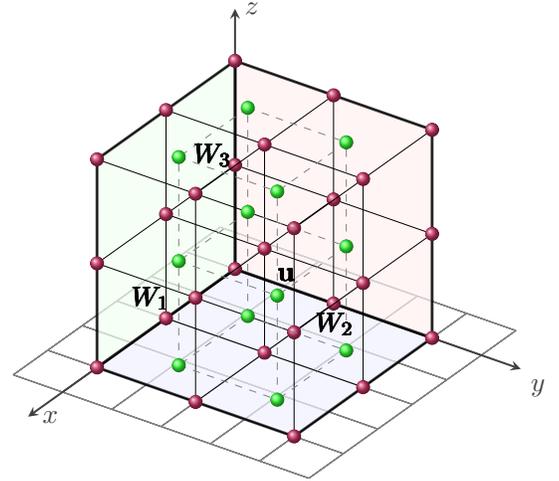
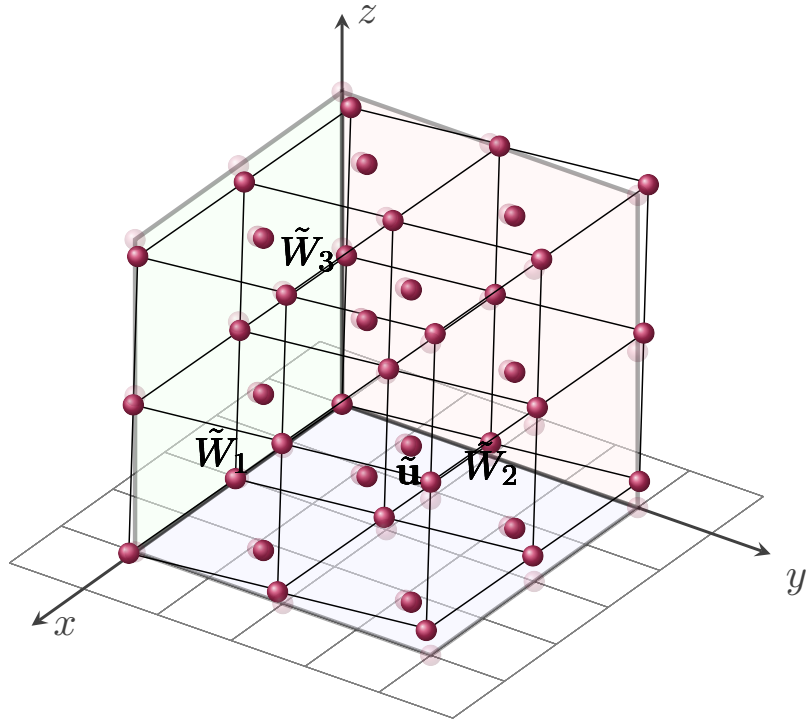

It is still not clear how to ensure that no short basis of $\tilde{\lat}$ includes the shortest vector $\vec{\tilde{W}}_1$. 
As in \Cref{theorem: Characterization of non-standard lattices}, we can write
\[\vec{u} = \frac{r_1}{t_1}W_1 + \ldots + \frac{r_n}{t_n}W_n \]
where $r_1,\ldots,r_n,t_1,\ldots,t_n \in \Z$.
Since the shortest vectors in $\lat_+$, as well as in $\tilde{\lat}$, are $W_1,\ldots,W_n,\vec{u}$,
the shortest basis may only include those.
If we construct $u$ such that $r_i \neq \pm 1$, the basis will have to include $W_i$.
So, we must select the coefficients of linear combination accordingly. 

Once establishing this requirement, it is not clear how to find the vector $\vec{u}$, which should have the same norm as $W_1,\ldots,W_n$, nevertheless, so that adding $\vec{u}$ to the lattice $\lat_+$ would not create any vectors with smaller norms.

Indeed, we may consider the following attempt (for the Euclidean norm): take the scaled grid $p\Z^n$ and add the vector $\vec{u}=(2,\ldots,2,1)$.
Since $\vec{u}$ should be a vector on the unit sphere, we must choose $p=\sqrt{4n-3}$.
However, $p\Z^n + \vec{u}$ includes the vector $\frac{p+1}{2}\vec{u}-p\cdot \vec{e}_1-\ldots-p\cdot\vec{e}_n=(1,\ldots,1,\frac{-p+1}{2})$ 
which has length $\sqrt{n-1+\frac{(p-1)^2}{4}} \approx \frac{p}{\sqrt{2}} < p$---that is, we have accidentally added a vector shorter than $p$ to the lattice.

To solve this problem, we use a vector with $2$'s and $3$'s entries.
Those have the property that if $\abs{2r}_p$ decreases, then $\abs{3r}_p$ must increase.

\subsection{Stage I: amending the Grid}  \label{subsec:stage1}




\begin{claim}
\label{thm:A short vector modulo prime}[A Short Vector]
For any large enough prime $p$, there exists $n\in \N$ and  $\mbox{$\vec{\sigma}$} \in {\left[-\frac{p-1}{2},\frac{p-1}{2}\right]}^{n-1}$ such that $\norm{\mbox{\boldmath $\sigma$} \mod p}_q = \ps{p^q-1}^\frac{1}{q}$, and for any $k \in \Z_p \setminus \set{0,1,-1}$, $\norm{k\mbox{\boldmath $\sigma$} \mod p}_q > \norm{\mbox{\boldmath $\sigma$} \mod p}_q$.    
\end{claim}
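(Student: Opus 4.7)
The plan is to let $\sigma$ have entries only from $\set{2, 3}$: specifically, $a$ copies of $2$ followed by $b$ copies of $3$, setting $n-1 = a+b$. Since $2, 3 \leq (p-1)/2$ for $p \geq 7$, we get $\norm{\sigma \mod p}_q^q = a\cdot 2^q + b\cdot 3^q$, which we want to equal $p^q - 1$. Because $\gcd(2^q, 3^q) = 1$, the Frobenius coin problem guarantees that such $a, b \geq 0$ exist whenever $p^q - 1 \geq (2^q - 1)(3^q - 1)$. A standard shift argument (replacing $(a, b)$ by $(a - A_0, b - B_0)$) further lets us enforce lower bounds $a \geq A_0$ and $b \geq B_0$ for any prescribed $A_0, B_0 = O_q(1)$, provided $p$ is large enough.

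Turning to the inequality $\norm{k\sigma \mod p}_q > \norm{\sigma \mod p}_q$ for $k \in \Z_p \setminus \set{0, \pm 1}$, write $x \defeq \abs{2k}_p$ and $y \defeq \abs{3k}_p$; the goal becomes $a(x^q - 2^q) + b(y^q - 3^q) > 0$. The structural observation is that (for $p \geq 7$) $x = 2 \iff k \equiv \pm 1$, $y = 3 \iff k \equiv \pm 1$, and $x = 0$ or $y = 0$ iff $k = 0$. Hence for our $k$, $x \in \set{1} \cup \set{3, 4, \ldots}$ and $y \in \set{1, 2} \cup \set{4, 5, \ldots}$. In the generic case $x \geq 3$ and $y \geq 4$ both summands are strictly positive and we are done. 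Otherwise one is in a degenerate case: (i) $x = 1$, which forces $k \equiv \pm 2^{-1} \pmod p$ and then $y = (p-3)/2$; (ii) $y = 1$, which forces $k \equiv \pm 3^{-1}$ and then $x$ is of order $p/3$; or (iii) $y = 2$, which forces $k \equiv \pm 2\cdot 3^{-1}$ and then $x$ is again of order $p/3$.

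In each degenerate case, substituting the forced values for $x, y$ and using the identity $a\cdot 2^q + b\cdot 3^q = p^q - 1$ reduces the desired inequality to a lower bound on $b$ in case (i) (concretely, $b > (2^q-1)(p^q-1) / [(p-3)^q - 3^q]$, whose right-hand side tends to $2^q - 1$ as $p \to \infty$), and to a lower bound on $a$ of the form $a \geq 3^q + O(1)$ in cases (ii)--(iii). These thresholds are bounded \emph{uniformly in $p$}, and are precisely the $A_0, B_0$ we enforced in the first step, so everything closes up. The main obstacle is the quantitative case analysis itself: verifying that (i)--(iii) exhaust all non-generic $k$ (using that for $p$ sufficiently large the elements $\pm 2^{-1}, \pm 3^{-1}, \pm 2\cdot 3^{-1} \pmod p$ are distinct and lie outside $\set{0, \pm 1}$), and that the forced large coordinate in each degenerate case grows like $\Theta(p)$ quickly enough to dominate the deficit contributed by the small coordinate.
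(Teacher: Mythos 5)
Your proposal matches the paper's proof: both restrict $\sigma$ to $\{2,3\}^{n-1}$, use a Chicken-McNugget/Frobenius decomposition $p^q-1 = a\cdot 2^q + b\cdot 3^q$ with both multiplicities bounded below by an $O_q(1)$ constant, and run the same case analysis on $|2k|_p$ and $|3k|_p$ (generic case both large; degenerate cases $|2k|_p=1$ or $|3k|_p\in\{1,2\}$ each forcing the other residue to be $\Theta(p)$). The only cosmetic difference is bookkeeping: the paper closes every degenerate case at once via the uniform bound $n_1 m_1^q + n_2 m_2^q \geq 2\cdot 3^q\cdot((p-4)/3)^q = 2(p-4)^q > p^q$, whereas you extract per-case lower bounds on $a$ or $b$; both are correct and equivalent in spirit.
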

The proof that such a vector exists appears in~\Cref{sec:A short vector modulo prime}.

Observe that the vector $(1,\mbox{\boldmath $\sigma$})$ is still the shortest up to multiplication in $\F_p$.

\begin{construction} \label{construction: n shorts}
Let $p,n$ and $\mbox{\boldmath $\sigma$} \in {\left[-\frac{p-1}{2},\frac{p-1}{2}\right]}^{n-1}$ be defined as in \Cref{thm:A short vector modulo prime}, and denote $\vec{u} = (1,\mbox{\boldmath $\sigma$})$. 
Let $W_i = p\vec{e}_i \in \R^n$ be the $p$-streched $n$-unit vectors.
Define the $n$-dimensional lattice
\[ \lat_+ = \lat[W_1,\ldots,W_n,\vec{u}] \]
\end{construction}

\begin{lemma}
$\pm W_1,\ldots,\pm W_n,\pm \vec{u}$ are the only vectors in $\lat_+$ with $\ell^q$ norm $p$, and they are the shortest vectors in the lattice (i.e., there are no non-zero lattice vectors in $\lat_+$ with norm $< p$).
\end{lemma}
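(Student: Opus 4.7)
The approach is to represent an arbitrary lattice vector as $\vec v = p\vec a + k\vec u$ with $\vec a \in \Z^n$ and $k \in \Z$, and to exploit the entry-wise bound $|x|_p \le |x|$. Because $\vec u \in \Z^n$, one can absorb integer multiples of $p$ appearing in $k$ into the term $p\vec a$, so I may assume $|k| \le \tfrac{p-1}{2}$. The inequality
\[
\norm{\vec v}_q \;\ge\; \norm{\vec v \bmod p}_q \;=\; \norm{k\vec u \bmod p}_q,
\]
with equality iff every coordinate of $\vec v$ lies in $[-\tfrac{p-1}{2}, \tfrac{p-1}{2}]$, will be the workhorse of the argument.

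I would then divide into cases by the value of $k \bmod p$. If $k = 0$, then $\vec v \in p\Z^n$ is a $p$-scaled integer vector, so $\norm{\vec v}_q \ge p$, with equality iff exactly one entry of $\vec a$ is $\pm 1$ and the others vanish, giving $\vec v = \pm W_i$. If $k \bmod p \notin \{0, \pm 1\}$, then using $\vec u = (1, \vec\sigma)$,
\[
\norm{k\vec u \bmod p}_q^{\,q} \;=\; |k|_p^{\,q} + \norm{k\vec\sigma \bmod p}_q^{\,q} \;>\; p^q - 1
\]
by \Cref{thm:A short vector modulo prime}; since $q \in \N_+$ and $\vec\sigma \in \Z^{n-1}$, each $|k\sigma_i|_p^{\,q}$ is a non-negative integer, so the right-hand side is an integer, hence at least $p^q$, and in fact at least $1 + p^q$ because $|k|_p \ge 1$. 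This yields $\norm{\vec v}_q > p$ strictly, so no such lattice vector contributes.

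In the final case $k = \pm 1$, \Cref{thm:A short vector modulo prime} gives $\norm{k\vec\sigma \bmod p}_q^{\,q} = p^q - 1$ and $|k|_p = 1$, so $\norm{\vec v}_q \ge p$; in particular, taking $\vec a = \vec 0$ verifies $\norm{\vec u}_q = p$. For equality, every entry of $\vec v = p\vec a \pm \vec u$ must lie in $[-\tfrac{p-1}{2}, \tfrac{p-1}{2}]$, but $\pm \vec u$ already does by the hypothesis on $\vec\sigma$, so any nonzero $\vec a$ would push some coordinate outside this box; hence $\vec a = \vec 0$ and $\vec v = \pm \vec u$. Combining the three cases shows that $\pm W_1, \ldots, \pm W_n, \pm \vec u$ are simultaneously the only lattice vectors of norm $p$ and the shortest non-zero vectors. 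The substantive difficulty is packaged inside \Cref{thm:A short vector modulo prime}; the only subtle step in the present argument is the integrality promotion from the strict inequality ``${>}\,p^q - 1$'' to ``${\ge}\, p^q$'', which uses that $q$ is a positive integer and $\vec\sigma$ has integer entries.
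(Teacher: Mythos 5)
Your proof is correct and follows essentially the same route as the paper's: decompose $\vec v = p\vec a + k\vec u$, pass to norms modulo $p$, and invoke \Cref{thm:A short vector modulo prime} for $k \not\equiv 0, \pm 1 \pmod p$. The integrality promotion you flag as the only subtle step is actually not needed: since $k \not\equiv 0 \pmod p$ forces $\abs{k}_p \geq 1$, adding $\abs{k}_p^q \geq 1$ to $\norm{k\vec\sigma \bmod p}_q^q > p^q - 1$ already gives $\norm{k\vec u \bmod p}_q^q > p^q$ directly, and in the $k=0$ case you should explicitly set aside $\vec v = \vec 0$.
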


\begin{proof}
First, observe that the norm of $\vec{u}$ is $\ps{\sum_{i=1}^{n-1} \sigma_i^q + 1}^\frac{1}{q}=p$. 

Assume, by way of contradiction, that there is another short lattice vector $\vec{x}\in\lat_+$, that is, $0,\pm W_1,\ldots,\pm W_n, \pm \vec{u} \neq \vec{x}$ such that
$\norm{\vec{x}}_q \le p$. 
Clearly, $\vec{x} \notin \lat[W_1,\ldots,W_n] = p \Z^n$, and hence there is $k \in \Z \setminus p\Z$ and $\vec{z} \in \Z^n$ such that $\vec{x} = k\vec{u} + p\vec{z}$. 
We may also assume, without loss of generality, that $0 < k \le p-1$ (if $k=pt+r$ where $t\in \Z$ and $r<p$, then $\vec{x}=r\vec{u} + p(t\vec{u}+\vec{z})$).
Now, observe that
\[ \norm{\vec{x}}_q \geq \norm{\vec{x} \mod p}_q = \norm{k\vec{u}+p\vec{z} \mod p}_q =  \norm{k\vec{u} \mod p}_q \geq \norm{\vec{u} \mod p}_q  = \norm{\vec{u}}_q = p; \]
here we use the fact that the norm of vectors in $[-\frac{p-1}{2} , \frac{p-1}{2}]^{n}$ does not change modulo $p$. 
By \Cref{thm:A short vector modulo prime}, if $k\neq \pm 1$, then $\norm{\vec{x}}>p$, which is a contradiction;
therefore, consider the case that $k = \pm1$ and, without loss of generality, the case $k=1$. 
In this case, $\vec{x}=\vec{u} + p\vec{z}$, and such a vector is the shortest if it is in ${\left[-\frac{p-1}{2},\frac{p-1}{2}\right]}^n$. This is the case if and only if $\vec{z}=\vec{0}$, and thus $\pm \vec{u}$ is the shortest vector in $\lat_+ \setminus p\Z^n$. 
\end{proof}

\subsection{Stage II: shaking the lattice}  \label{subsec:fixing}

After constructing $\lat_+ \subset \R^n$, a lattice with $2(n+1)$ vectors of the shortest length, we are able to proceed to construct our ``holy grail" lattice. 
We start with the lattice $\lat_+$, and  add a tiny noise (without increasing the rank of the lattice), such that $\vec{v}_1=(W_1,\eps)$ will be the shortest lattice vector.

\begin{construction}
Let $\lat_+ \subset \R^n$ be as in \Cref{construction: n shorts}. Since a lattice is a discrete group, there exists $R>p$ such that $\lat_+ \cap \B_q(R) = \set{\vec{0},\pm W_1,\ldots,\pm W_n,\pm \vec{u}}$. 
Now, let $0<\eps < \frac{p^q(R^q-p^q)}{((n-1)(p-1)+1)^q},\frac{R-p}{2}$
and define some new vectors:

\[ \vec{v}_1 = (W_1, \eps),\qquad \vec{v}_i = (W_i,2\eps), \qquad \tilde{\vec{u}} = \frac{1}{p} \sum_{i=1}^n u_i\vec{v}_i = \ps{\vec{u}, \frac{2\sum_{i=1}^{n-1} \sigma_i + 1}{p}\eps}\]

for $2\leq i\leq n$.

We define $\tilde{\lat} \defeq \lat[\vec{v}_1,\ldots,\vec{v}_n,\tilde{\vec{u}}] \subset \R^{n+1}$ and $\mathcal{S} = \set{\vec{0}, \pm\vec{v}_1,\ldots,\pm\vec{v}_n,\pm \tilde{\vec{u}}}$.


\end{construction}

We first prove that the noise we added does not create any new short vectors in the lattice. However, it did alter the lengths of the shortest vectors slightly.

\begin{lemma} \label{prime lat}
$\tilde{\lat}$ is an $n$-dimensional lattice and moreover
\begin{itemize}
    \item $\tilde{\lat} \cap \B_q(R) = \mathcal{S}$---the only vectors in $\tilde{\lat}$ whose $\ell^q$ norm is at most $R$ is $\mathcal{S}$.
    \item  $\pm \vec{v}_1$ is the shortest vector in the lattice $\tilde{\lat}$.
\end{itemize}
\end{lemma}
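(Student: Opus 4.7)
The plan is to prove all three assertions through a single unifying observation: the projection $\pi : \R^{n+1} \to \R^n$ onto the first $n$ coordinates restricts to a bijective group homomorphism from $\tilde{\lat}$ onto $\lat_+$. Once this is in hand, the rank, the enumeration of short vectors, and the identification of the shortest vector all transfer from $\lat_+$, which was already analyzed in the previous lemma.

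First I would verify surjectivity: since $\pi(\vec{v}_i) = W_i$ and $\pi(\tilde{\vec{u}}) = \vec{u}$ are generators of $\lat_+$, the image is all of $\lat_+$. For injectivity, suppose $\vec{x} = \sum_i a_i \vec{v}_i + b \tilde{\vec{u}}$ lies in $\ker \pi$. Reading off coordinates gives $p a_i + b u_i = 0$ for every $i$; since $u_1 = 1$ this forces $b = -p a_1$ and $a_i = a_1 u_i$ for $i \geq 2$. Substituting back and invoking the defining identity $p \tilde{\vec{u}} = \sum_i u_i \vec{v}_i$ collapses $\vec{x}$ to $\vec{0}$. Together with the fact that $\tilde{\vec{u}}$ lies in the $\Q$-span of $\vec{v}_1,\ldots,\vec{v}_n$, this shows $\tilde{\lat}$ has rank $n$, and discreteness transfers through the bijection from $\lat_+$.

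For the second bullet, projection cannot increase $\ell^q$ norm, so any $\vec{x} \in \tilde{\lat} \cap \B_q(R)$ satisfies $\pi(\vec{x}) \in \lat_+ \cap \B_q(R)$, which by the previous lemma equals $\set{\vec{0}, \pm W_1, \ldots, \pm W_n, \pm \vec{u}}$. Injectivity then pins $\vec{x}$ down to the corresponding lift in $\mathcal{S}$. Conversely one must check that each element of $\mathcal{S}$ actually lies in $\B_q(R)$: the bound $\eps < (R-p)/2$ together with the triangle inequality yields $\norm{\vec{v}_i}_q \leq p + 2\eps < R$, while the first bound on $\eps$ is calibrated precisely so that $\norm{\tilde{\vec{u}}}_q^q = p^q + \ps{(2S+1)\eps/p}^q < R^q$, where $S = \sum_{i=1}^{n-1}\sigma_i$ and $\abs{2S+1} \leq (n-1)(p-1)+1$.

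For the third bullet, every element of $\mathcal{S} \setminus \set{\vec{0}}$ has its first $n$ coordinates of $\ell^q$-norm exactly $p$, so all comparisons reduce to the last coordinate. Immediately $\eps < 2\eps$ shows $\vec{v}_1$ is strictly shorter than each $\vec{v}_i$ for $i \geq 2$. The remaining comparison with $\tilde{\vec{u}}$, whose last coordinate is $(2S+1)\eps/p$, reduces to $\abs{2S+1} > p$. I expect this to be the main (mild) obstacle, since Claim 4.2 controls $\norm{\sigma \bmod p}_q$ but not directly $S$. The remedy is to exploit the sign freedom in $\sigma$: replacing any $\sigma_j$ by $-\sigma_j$ preserves the property of Claim 4.2 because $\abs{k\sigma_j}_p$ is invariant under sign flips. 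Choosing all $\sigma_i$ of the same sign gives $\abs{S} = \norm{\sigma}_1 \geq \norm{\sigma}_q = (p^q - 1)^{1/q} > (p-1)/2$ for large enough $p$, securing $\abs{2S+1} > p$ and hence the strict minimality of $\pm\vec{v}_1$.
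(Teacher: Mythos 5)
Your proof is correct, and it takes a genuinely different route from the paper's. The paper proves the rank claim by a direct linear-independence observation, computes the $\ell^q$ lengths of each vector in $\mathcal{S}$, then separately handles the cases $q=1$ and $q>1$ to order these lengths, using (i) the explicit bound $4n-3>2p$ from the appendix remark when $q>1$, and (ii) the explicit value $\sum\sigma_i=p-1$ when $q=1$, both of which rely on the specific $\sigma\in\{2,3\}^{n-1}$ with all positive entries produced by the appendix construction. Your approach instead packages the rank claim and the enumeration $\tilde{\lat}\cap\B_q(R)=\mathcal{S}$ into a single clean observation --- that the coordinate projection $\pi$ is a norm-nonincreasing bijection $\tilde{\lat}\to\lat_+$ --- and then compares vectors in $\mathcal{S}$ purely through their last coordinates, which eliminates the $q=1$ vs.\ $q>1$ case split entirely. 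You also correctly notice that the statement of the short-vector claim, read in isolation, does not control $S=\sum\sigma_i$ (it only fixes $\norm{\sigma\bmod p}_q$), and you supply a sign-normalization trick exploiting the invariance of $\abs{k\sigma_j}_p$ under $\sigma_j\mapsto-\sigma_j$ to guarantee $\abs{S}=\norm{\sigma}_1\geq(p^q-1)^{1/q}>(p+1)/2$, hence $\abs{2S+1}>p$. In the paper's actual construction this trick is unnecessary because $\sigma$ already has all entries in $\{2,3\}$ (so $S\geq 2(n-1)$ and $2S+1\geq 4n-3$), but your fix makes the lemma's proof self-contained, depending only on the stated properties of $\sigma$ rather than the internals of the appendix. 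Two small points worth spelling out if you wanted to be fully rigorous: discreteness of $\tilde{\lat}$ follows because $\pi$ restricted to $\spn_\R(\vec{v}_1,\ldots,\vec{v}_n)$ is a linear isomorphism onto $\R^n$ with continuous inverse, so it pulls the discrete set $\lat_+$ back to a discrete set; and when verifying $\norm{\tilde{\vec{u}}}_q<R$ one should note that the paper's bound on $\eps$ is stated as a bound on $\eps$ rather than $\eps^q$, so one should take $\eps$ additionally smaller than $1$ (which is harmless).
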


\begin{proof}
We observe that $(\vec{v}_1,\ldots,\vec{v}_n)$ are linearly independent, while $\tilde{\vec{u}} \in \spn_\R(\vec{v}_1,\ldots,\vec{v}_n)$; thus, the dimension of the lattice is $n$.

\noindent
Let us calculate the length of the vectors in $\mathcal{S}$:

\begin{itemize}
\item $\norm{\vec{v}_1}_q=\ps{p^q+\eps^q}^\frac{1}{q}$.
\item $\norm{\vec{v}_i}_q = \ps{p^q+2^q\eps^q}^\frac{1}{q}$ for $2\le i \le n$.
\item 
\[ \norm{\tilde{\vec{u}}}_q =\ps{p^q + \frac{(2\sum_{i=1}^{n-1} \sigma_i + 1)^q}{p^q}\eps^q}^\frac{1}{q} \le \ps{p^q + \frac{((n-1)(p-1)+1)^q}{p^q}\eps^q }^\frac{1}{q} < R \]
\end{itemize}
Now, if $q>1$, then by \Cref{remark:n and p}, $2p<4n-3$,
\[ \norm{\tilde{\vec{u}}}_q \geq \ps{p^q + \frac{(4n-3)^q}{p^q}\eps^q}^\frac{1}{q} > \ps{p^q+2^q\eps^q}^\frac{1}{q}, \]
and thus $\norm{\vec{v}_1}<\norm{\vec{v}_i}_q<\norm{\tilde{\vec{u}}}_q<R$ for any $2\leq i \leq n$. If $q=1$, then
\[ \norm{\tilde{\vec{u}}}_1=p+\frac{2(p-1)+1}{p}\epsilon < p+2\epsilon = \norm{\vec{v}_n}_1 < R \]
and 
\[ \norm{\tilde{\vec{u}}}_1=p+(2-\frac{1}{p})\epsilon > p +\epsilon = \norm{\vec{v}_1}_1. \]

We now show that $\mathcal{S}$ contains the shortest vectors in $\tilde{\lat}$, i.e., any other lattice vector has norm larger than $R$. Let $\vec{x}=(x_1,\dots,x_{n+1})\in\tilde{\lat}\setminus\mathcal{S}$. Then, the vector $\vec{x}'=(x_1,\dots,x_n)\in\lat_+$ has norm strictly greater than $R$ by definition and the result follows.

\end{proof}

\begin{lemma}
Any short basis of the lattice $\tilde{\lat}$ (where $\max_{1\le i\le n} \norm{M_i}_q$ is minimized) cannot include the shortest non-zero vector of $\tilde{\lat}$.
\end{lemma}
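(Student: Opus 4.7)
The plan is to exhibit an explicit basis $B^\star$ of $\tilde{\lat}$ that does not contain $\vec{v}_1$, and then rule out every basis $M$ that does contain $\vec{v}_1$ yet could match its maximum $\ell^q$-norm. The candidate is $B^\star \defeq (\tilde{\vec{u}}, \vec{v}_2, \ldots, \vec{v}_n)$. By construction, $p\tilde{\vec{u}} = \vec{v}_1 + \sigma_1\vec{v}_2 + \cdots + \sigma_{n-1}\vec{v}_n$, and since the coefficient of $\vec{v}_1$ equals $1$, the vector $\vec{v}_1$ is recovered as an integer combination of $B^\star$, so $B^\star$ indeed generates $\tilde{\lat}$. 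Writing $L \defeq \max_i \|B^\star_i\|_q$, we have by \Cref{prime lat} that $L = \max_{\vec{s}\in\mathcal{S}} \|\vec{s}\|_q < R$.

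Now consider any basis $M = (\vec{v}_1, B_2, \ldots, B_n)$ of $\tilde{\lat}$ with $\max_i \|B_i\|_q \leq L$ and aim at a contradiction. Since $L < R$, \Cref{prime lat} forces each $B_i \in \mathcal{S}$, and linear independence with $\vec{v}_1$ restricts the $B_i$'s to the set $\{\pm\vec{v}_2, \ldots, \pm\vec{v}_n, \pm\tilde{\vec{u}}\}$ with distinct indices. If none of the $B_i$'s is $\pm\tilde{\vec{u}}$, then $\spn_\Z M \subseteq \lat[\vec{v}_1, \ldots, \vec{v}_n]$, which is a proper index-$p$ sublattice of $\tilde{\lat}$ (it fails to contain $\tilde{\vec{u}}$), so $M$ cannot be a basis. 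Otherwise exactly one $B_i$ equals $\pm\tilde{\vec{u}}$ (linear independence forbids two copies), and the remaining $n-2$ are $\pm\vec{v}_i$'s for distinct indices $i \in \{2,\ldots,n\}\setminus\{j^\star\}$, where exactly one index $j^\star \in \{2,\ldots,n\}$ is missing.

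The heart of the argument is ruling out this last case, and this is where the choice of $\sigma$ enters. Expanding every element of $\spn_\Z M$ in the $\R$-basis $(\vec{v}_1, \ldots, \vec{v}_n)$, the $\vec{v}_{j^\star}$-coordinate vanishes for $\vec{v}_1$ and for every $\vec{v}_i$ with $i \neq j^\star$, while it equals $\pm\sigma_{j^\star-1}/p$ for $\pm\tilde{\vec{u}}$. Hence the $\vec{v}_{j^\star}$-coordinates of vectors in $\spn_\Z M$ form the cyclic subgroup $(\sigma_{j^\star-1}/p)\Z \subset \R$. For $\vec{v}_{j^\star}$ (whose $\vec{v}_{j^\star}$-coordinate is $1$) to lie in $\spn_\Z M$ we would need $1 \in (\sigma_{j^\star-1}/p)\Z$, i.e.\ $p/\sigma_{j^\star-1} \in \Z$; since $p$ is prime and $|\sigma_{j^\star-1}| \leq (p-1)/2 < p$, this forces $\sigma_{j^\star-1} \in \{\pm 1\}$. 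The construction of $\sigma$ in \Cref{thm:A short vector modulo prime} is chosen (or post-processed) to avoid $\pm 1$ coordinates, as witnessed by the entries $\{2,3,4,5,6,7\}$ occurring in the $18$-dimensional example of \Cref{thm:main}, yielding the desired contradiction. I conclude that every basis $M$ containing $\vec{v}_1$ must have some coordinate of norm exceeding $L$, and so the shortest basis does not include $\vec{v}_1$. The main obstacle is the cyclic-subgroup divisibility step, together with the side observation that the construction of $\sigma$ in \Cref{sec:A short vector modulo prime} can be arranged so that $|\sigma_i|\geq 2$ for all $i$.
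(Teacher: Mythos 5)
Your proof is correct and follows essentially the same approach as the paper: exhibit the competing basis $(\tilde{\vec{u}},\vec{v}_2,\ldots,\vec{v}_n)$, use \Cref{prime lat} to force any basis of comparable maximum length to consist of vectors from $\mathcal{S}$ that must include $\tilde{\vec{u}}$ and hence omit exactly one $\vec{v}_{j^\star}$, and derive a contradiction when $j^\star\neq 1$. Your cyclic-subgroup/divisibility step, and the explicit remark that the construction must avoid $\pm 1$ entries in $\sigma$ (which the appendix guarantees by taking $\sigma\in\{2,3\}^{n-1}$), are a more careful rendering of the paper's terse ``$p\notin\sigma_i\Z$'' and of the implicit choice of bound $L$, but the underlying argument is the same.
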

This Lemma implies \Cref{thm:Short basis vs. shortest vector}.

\begin{proof}
The lattice $\tilde{\lat}$ has a basis $\vec{v}_2,\ldots,\vec{v}_{n},\tilde{\vec{u}}$ (as $\vec{v}_1 = p\vec{\tilde u}-\sum_{i=2}^n \sigma_i \vec{v}_i$)
hence the length of its shortest basis is at most $\norm{\tilde{\vec{u}}}$.
By \Cref{prime lat}, any basis of the lattice $\tilde{\lat}$ of length at most $\norm{\tilde{\vec{u}}}$, may  include only vectors in $\mathcal{S}$.
Such a basis must include $\tilde{\vec{u}}$, since the lattice spanned by $\vec{v}_1,\ldots,\vec{v}_n$ only has integer multiples of $p$ in its first coordinates, and hence $\lat[\vec{v}_1,\ldots,\vec{v}_n] \subsetneq \tilde{\lat}$.
Therefore, (up to change of signs) the basis must be $M^{(i)}=\set{\vec{v}_k \mid k \neq i} \cup \set{\tilde{\vec{u}}}$ for some $1\le i \le n$. \\
If $i \neq 1$, we claim that $\lat[M^{(i)}] \neq \tilde{\lat}$. 
Indeed the $i$-th coordinate of all the vectors in $\lat[M^{(i)}]$ will be in $\sigma_i\Z$, unlike our lattice, which has a vector with $p$ in its $i$-th coordinate, and of course $p \notin \sigma_i\Z$.  \\
So, it must be the case that $i=1$, that is, the shortest basis does not include $\pm \vec{v}_1$, the shortest lattice vector. 
\end{proof}

\begin{corollary}
There exists a lattice $\lat \subset \R^n$ such that any basis of $\lat$ of minimal $\ell^{q'}$ length cannot include the shortest lattice vector.
\end{corollary}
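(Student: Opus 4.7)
The plan is to re-use the Stage II lattice $\tilde\lat$ from \Cref{subsec:fixing}, re-choosing the noise parameter $\eps$ sufficiently small to accommodate the $\ell^{q'}$-length measure. The previous subsection already yields three facts I will leverage: (i) only the $2(n+1)$ vectors in $\mathcal{S}$ lie in the $\ell^q$-ball of radius $R$ around the origin; (ii) $\pm\vec{v}_1$ is the unique shortest non-zero vector of $\tilde\lat$; and (iii) the only basis of $\tilde\lat$ contained in $\mathcal{S}$, up to signs, is $M^{(1)} = \{\vec{v}_2, \ldots, \vec{v}_n, \tilde{\vec u}\}$, which omits $\vec{v}_1$. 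Thus it suffices to show that any basis of minimal $\ell^{q'}$-length is contained in $\mathcal{S}$.

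To this end, I would first lower-bound the $\ell^{q'}$-length of any basis $B = \{B_1, \ldots, B_n\}$ of $\tilde\lat$ that contains a ``long'' vector, i.e. some $B_j$ with $\|B_j\|_q > R$. Since every $B_i$ is a non-zero lattice vector we have $\|B_i\|_q \geq \lambda_1^{(q)}(\tilde\lat) = \|\vec{v}_1\|_q$, so for finite $q'$ the $\ell^{q'}$-length of $B$ is at least
\[ \left(R^{q'} + (n-1)\|\vec{v}_1\|_q^{q'}\right)^{1/q'}, \]
and in the $q' = \infty$ case it exceeds $R$ (the $q' = \infty$ case is in fact already settled by the preceding lemma). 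Next I would upper-bound the $\ell^{q'}$-length of $M^{(1)}$ using the norm computations from the proof of \Cref{prime lat}: it equals
\[ \left((n-1)\,\|\vec{v}_2\|_q^{q'} + \|\tilde{\vec u}\|_q^{q'}\right)^{1/q'}, \]
where each of $\|\vec{v}_2\|_q$ and $\|\tilde{\vec u}\|_q$ tends to $p$ as $\eps \to 0$. Since $R > p$ gives the strict inequality $n p^{q'} < R^{q'} + (n-1) p^{q'}$, by continuity there is an explicit $\eps_0 > 0$ (read off directly from the closed-form expressions for $\|\vec{v}_i\|_q$ and $\|\tilde{\vec u}\|_q$) such that for all $\eps < \eps_0$ the $\ell^{q'}$-length of $M^{(1)}$ is strictly smaller than the above lower bound.

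Combining the two bounds forces any minimal $\ell^{q'}$-length basis to lie entirely in $\mathcal{S}$, and then (iii) identifies it (up to signs) as $M^{(1)}$, which excludes $\pm\vec{v}_1$. The main obstacle is only the bookkeeping needed to obtain a clean, uniform bound on $\eps$ — tightest in the small-$q$, small-$q'$ regime, where the slack between $n p^{q'}$ and $R^{q'} + (n-1)p^{q'}$ is smallest — but no new structural ingredient is required beyond the analysis of $\tilde\lat$ already carried out in Stage II.
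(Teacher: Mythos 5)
Your proposal is correct and follows essentially the same route as the paper's own proof: both observe that any basis not coinciding (up to signs and ordering) with $M^{(1)}=\{\vec{v}_2,\ldots,\vec{v}_n,\tilde{\vec{u}}\}$ must contain a vector outside $\mathcal{S}$ and hence of $\ell^q$-norm at least $R>p$, lower-bound the $\ell^{q'}$-length of such a basis by $\bigl(R^{q'}+(n-1)\lambda_1^{(q)}(\tilde\lat)^{q'}\bigr)^{1/q'}$, note that $\|M^{(1)}\|_{q'}\to p\,n^{1/q'}$ as $\eps\to 0$, and conclude by continuity that a sufficiently small $\eps$ forces the minimizer to be $M^{(1)}$. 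If anything, your version is slightly cleaner: you use the lower bound $R$ for the out-of-$\mathcal{S}$ basis vector, whereas the paper's proof uses $2^{1/q}p$, which may actually exceed $R$ (e.g.\ for $q=2$ the vector $W_1-\vec{u}$ in $\lat_+$ has norm $\sqrt{2p^2-2p}<\sqrt{2}p$), so the paper's stated intermediate bound is not literally justified — though, as you show, $R>p$ is already enough for the argument to close.
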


\begin{proof}
Any essentially distinct basis $M'$ of $\tilde{\lat}$ (not only up to signs/order of basis vectors) than 
$M = [\vec{v}_2,\ldots,\vec{v}_n,\tilde{\vec{u}}]$, must include a vector longer than $2^\frac{1}{q}p$, and hence the basis length is at least
\[\norm{M'}_{q'} \geq {\ps{\ps{2^\frac{1}{q}p}^{q'} + (n-1)\cdot p^{q'}}}^\frac{1}{q'} > pn^\frac{1}{q'} \]
while 
\[ \norm{M}_q \leq \ps{p^q + \max \ps{\frac{\ps{(n-1)(p-1)+1}^q}{p^q},2^q}\eps^q } \cdot n^\frac{1}{q'}. \]
    Therefore, when $\eps$ decreases to zero, $\lim_{\eps \rightarrow 0} \norm{M}_q = pn^\frac{1}{q'} < \norm{M'}_q$.
    So, one can choose small enough $\eps$ such that $\norm{M} < \norm{M'}_q$ (observe that $\eps$ does not depend in $M'$, only in $p,n,\eps,q$).
    In particular, any basis of $\lat$ that include $\vec{v}_n$ (the shortest vector of $\tilde{\lat}$) is longer in "$\ell^{q'}$ sense'' then $M$.
\end{proof}






\section{Acknowledgment}

We heartily thank Shvo Regavim for insightful discussions as well as important remarks on earlier draft of this paper.

\newpage
\appendix

\section{A Short Vector Modulo Prime}
\label{sec:A short vector modulo prime}

\begin{claim} \label{claim:euclid}
There exist $N_0 \in \N$ such that for any integer $n \geq N_0$, there exists integers $n_1,n_2 \geq 2\cdot 3^q$ such that $n=n_12^q + n_23^q$.
\end{claim}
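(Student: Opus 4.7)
The plan is to reduce this to the classical Sylvester--Frobenius (Chicken McNugget) theorem applied to the coprime pair $2^q, 3^q$, and then shift to obtain the required lower bounds on the coefficients.

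First, I would observe that $\gcd(2^q, 3^q) = 1$, so by Sylvester--Frobenius every integer $m \ge (2^q-1)(3^q-1)$ admits a representation $m = m_1 \cdot 2^q + m_2 \cdot 3^q$ with $m_1, m_2 \in \N$ (that is, $m_1, m_2 \ge 0$). I would take this as a black box; if the paper prefers self-containment, one can give a one-line proof by writing $m = (am) 2^q + (bm) 3^q$ for Bezout coefficients $a,b$ and then adding/subtracting multiples of $2^q \cdot 3^q$ to land in the first quadrant, with the threshold $(2^q-1)(3^q-1)$ coming from the standard counting.

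Next, the shift. Set $N_0 \defeq 2 \cdot 3^q(2^q + 3^q) + (2^q-1)(3^q-1)$ and assume $n \ge N_0$. Define
\[ m \defeq n - 2 \cdot 3^q \cdot 2^q - 2 \cdot 3^q \cdot 3^q, \]
so $m \ge (2^q-1)(3^q-1)$. Applying Sylvester--Frobenius to $m$ yields nonnegative integers $m_1, m_2$ with $m = m_1 \cdot 2^q + m_2 \cdot 3^q$. Setting
\[ n_1 \defeq m_1 + 2 \cdot 3^q, \qquad n_2 \defeq m_2 + 2 \cdot 3^q, \]
we immediately get $n_1, n_2 \ge 2 \cdot 3^q$ and
\[ n_1 \cdot 2^q + n_2 \cdot 3^q = m + 2 \cdot 3^q \cdot 2^q + 2 \cdot 3^q \cdot 3^q = n, \]
as desired.

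I do not expect any genuine obstacle here; the only thing to be slightly careful about is packaging the classical Frobenius bound in a self-contained way if the writeup demands it, but the shifting trick above is entirely routine.
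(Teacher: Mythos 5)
Your argument is correct, and it takes a genuinely different (and arguably cleaner) route than the paper. The paper constructs $N_0$ from Bezout coefficients and then runs an explicit induction on $n$: starting from a representation of $n$ with both coefficients $\geq 2\cdot 3^q$, it passes to $n+1$ by one of two updates, $(n_1+a, n_2+b)$ or $(n_1+a-d3^q,\, n_2+b+d2^q)$, and shows that the size threshold $N_0$ forces at least one of these to keep both coordinates above $2\cdot 3^q$. This is entirely self-contained but somewhat laborious. You instead invoke the Sylvester--Frobenius bound as a black box to get a nonnegative representation above the threshold $(2^q-1)(3^q-1)$, then shift both coefficients up by $2\cdot 3^q$ and absorb the shift into $N_0 = 2\cdot 3^q(2^q+3^q) + (2^q-1)(3^q-1)$. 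The shift trick is immediate and the arithmetic is trivially checkable, at the cost of importing (or re-proving) the Frobenius bound. Both are valid; yours is shorter and more modular, while the paper's is fully self-contained and does not depend on the precise Frobenius constant.
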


\begin{proof}
As $\mathrm{gcd}(2^q,3^q)=1$ there exists $a,b \in \Z$ such that $a2^q+b3^q=1$, and without loss of generality $a>0,b<0$. Observe that
\[ 1 =a2^q+b3^q = (a-d3^q)2^q + (b+d2^q)3^q. \]
One can take large enough $d\in \N$ such that $b'=b+d2^q>0$.
Now, denote $N_0=((2+d)3^q)2^q+(2\cdot 3^q-b)3^q$.
We prove by induction that any $n \geq N_0$ can be written as $n=n_12^q+n_23^q,n_1,n_2 \geq 3^q$ (the induction basis is induced by that $(2+d)3^q,2\cdot 3^q-b \geq 2\cdot 3^q$).
Assume $n=n_1 2^q+n_2 3^q$, $n_1,n_2 \geq 3^q$ and therefore
\[ n+1 = (n_1+a)2^q+(n_2+b)3^q = (n_1+a-d3^q)2^q + (n_2+b+d2^q)3^q. \]
If both $n_2+b \leq 2\cdot 3^q$ and $n_1+a-d\cdot 3^q \leq 2 \cdot 3^q$, then $n_1 \leq (2+d)3^q-a,n_2 \leq 2\cdot 3^q-b$ and so 
\[n_12^q+n_23^q \leq ((2+d)3^q-a)2^q + (3^q-b)3^q<(2+d)3^q\cdot 2^q + (2\cdot 3^q-b)\cdot 3^q=N_0,\]
which is a contradiction.
Therefore, it must be the case that $n_1+a,n_2+b>2\cdot 3^q$, or $n+1+a-d3^q,n+2+b+d2^q>2\cdot 3^q$.
\end{proof}

\begin{claim}
[Existence of A Short Vector]
For any large enough prime, there exists $n\in \N$, $p<n-1$ and  $\mbox{\boldmath $\sigma$} \in \set{2,3}^{n-1}$ such that $\norm{\mbox{\boldmath $\sigma$} \mod p}_q = \ps{p^q-1}^\frac{1}{q}$, and for any $k \in \Z_p \setminus \set{0,1,-1}$, $\norm{k\mbox{\boldmath $\sigma$} \mod p} > \norm{\mbox{\boldmath $\sigma$} \mod p}$.    
\end{claim}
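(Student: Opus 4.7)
The construction is driven by \Cref{claim:euclid} (or a routine strengthening that replaces $2\cdot 3^q$ in its inductive proof by any prescribed constant $C_q$, at the cost of a larger threshold $N_0$). For any sufficiently large prime $p$, apply this to $p^q - 1$ to obtain a decomposition $p^q - 1 = n_1\cdot 2^q + n_2\cdot 3^q$ with $n_1, n_2 \geq C_q$, where $C_q$ will be chosen large enough to make a H\"older estimate below work. Set $n := n_1 + n_2 + 1$ and take $\boldsymbol{\sigma} \in \set{2,3}^{n-1}$ with exactly $n_1$ entries equal to $2$ and $n_2$ equal to $3$. Then $\norm{\boldsymbol{\sigma}}_q^q = p^q - 1$; every entry lies in $[-(p-1)/2,\,(p-1)/2]$ for $p \geq 7$, so $\norm{\boldsymbol{\sigma} \bmod p}_q = (p^q-1)^{1/q}$; and $p < n-1 = n_1 + n_2 \geq (p^q-1)/3^q$ holds for $p$ large (assuming $q \geq 2$).

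\textbf{Key algebraic identity.} What remains is the multiplicative property: for every $k \in \Z_p \setminus \set{0,\pm 1}$, show $n_1 \abs{2k}_p^q + n_2 \abs{3k}_p^q > p^q - 1$. Let $\rho_j \in (-p/2, p/2]$ be the centered residue of $jk \pmod{p}$, so $\abs{\rho_j} = \abs{jk}_p$. Writing $2k = ap + \rho_2$ and $3k = bp + \rho_3$, the identity $6k = 3(ap + \rho_2) = 2(bp + \rho_3)$ yields $3\rho_2 - 2\rho_3 = (2b - 3a)p$, an integer multiple of $p$. Hence either (i) $3\rho_2 = 2\rho_3$, or (ii) $\abs{3\rho_2 - 2\rho_3} \geq p$, which by the triangle inequality gives $3\abs{2k}_p + 2\abs{3k}_p \geq p$.

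\textbf{The two cases.} In case (i), $3\rho_2 = 2\rho_3$ with $\gcd(2,3)=1$ forces $\rho_2 = 2t$ and $\rho_3 = 3t$ for some integer $t$; then $2k \equiv 2t \pmod p$ yields $k \equiv t \pmod p$, so $\abs{2k}_p = 2\abs{k}_p$ and $\abs{3k}_p = 3\abs{k}_p$. Since $\abs{k}_p \geq 2$, we get $n_1 \abs{2k}_p^q + n_2 \abs{3k}_p^q = \abs{k}_p^q (n_1\cdot 2^q + n_2\cdot 3^q) \geq 2^q(p^q-1) > p^q - 1$. In case (ii), apply H\"older's inequality to $(3/n_1^{1/q},\, 2/n_2^{1/q})$ and $(n_1^{1/q}\abs{2k}_p,\, n_2^{1/q}\abs{3k}_p)$ with conjugate exponents $q' := q/(q-1)$ and $q$ to obtain
\[
(3\abs{2k}_p + 2\abs{3k}_p)^q \;\leq\; \Bigl(\tfrac{3^{q'}}{n_1^{q'-1}} + \tfrac{2^{q'}}{n_2^{q'-1}}\Bigr)^{q-1} \bigl(n_1\abs{2k}_p^q + n_2\abs{3k}_p^q\bigr).
\]
Combined with $3\abs{2k}_p + 2\abs{3k}_p \geq p$ and the choice of $C_q$ making the pre-factor at most $1$, this gives $n_1\abs{2k}_p^q + n_2\abs{3k}_p^q \geq p^q > p^q - 1$, completing the proof.

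\textbf{The main obstacle} is ensuring that $n_1$ and $n_2$ can be made large enough for the H\"older pre-factor to be at most $1$. For $q = 2$ the condition reduces to $9/n_1 + 4/n_2 \leq 1$, which is already satisfied by the bound $n_1, n_2 \geq 2\cdot 3^q = 18$ from \Cref{claim:euclid}. For larger $q$ one needs the strengthened form of \Cref{claim:euclid} alluded to above; the required modification of the inductive argument is routine but must be carried out. A secondary subtlety is the boundary behaviour of the residues $\rho_j$ at $\pm p/2$, which is controlled by the fact that $p$ is odd.
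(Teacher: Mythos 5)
Your proof is correct and takes a genuinely different route from the paper's. The paper first observes that if $n_1\abs{2k}_p^q + n_2\abs{3k}_p^q \le p^q-1 = n_1 2^q + n_2 3^q$ then $\abs{2k}_p \le 1$ or $\abs{3k}_p \le 2$, and then runs an explicit case analysis ($\abs{2k}_p=1$; $\abs{3k}_p=1$ with $p\equiv 1$ or $2 \pmod 3$; $\abs{3k}_p=2$ with $p\equiv 1$ or $2 \pmod 3$), computing the \emph{other} residue in each sub-case and finding it at least $(p-4)/3$; with $n_1,n_2 \ge 2\cdot 3^q$ from \Cref{claim:euclid}, a single term then exceeds $p^q-1$. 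You instead extract the uniform identity $3\rho_2 - 2\rho_3 \equiv 0 \pmod{p}$, which cleanly bifurcates into the proportional case $\rho_2 = 2t$, $\rho_3 = 3t$ with $\abs{k}_p = \abs{t} \ge 2$ (handled by scaling), and the residual case $3\abs{2k}_p + 2\abs{3k}_p \ge p$ (handled by H\"older). Your version avoids the $p \bmod 3$ sub-cases and makes the dependence on $q$ appear only through the H\"older conjugate, which is tidier.

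The trade-off you correctly flag is quantitative: your H\"older pre-factor condition requires $n_1,n_2$ at least on the order of $(3^{q'}+2^{q'})^{q-1}$, which for $q\ge 3$ exceeds the paper's $2\cdot 3^q$ (e.g.\ roughly $65$ vs.\ $54$ at $q=3$). So your route genuinely does need the strengthened version of \Cref{claim:euclid} with a larger constant $C_q$; as you say this is a routine Frobenius/Chicken-McNugget adjustment, but it is an extra step the paper's choice of constant avoids by design. Two small remarks: your case (i) argument implicitly uses that $3\rho_2 = 2\rho_3$ with $\abs{\rho_3}\le (p-1)/2$ forces $\abs{t}\le (p-1)/6 < p/2$, so $t$ really is the centered residue of $k$ — worth one line. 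And the H\"older step as written presupposes $q>1$ (finite $q'$); this is consistent with the claim since the requirement $p < n-1$ together with $2^q n_1 + 3^q n_2 = p^q-1$ already rules out $q=1$.
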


\begin{proof}
We take $N_0$ as in \Cref{claim:euclid}, and as long as $p^q-1\geq N_0$, there are $n_1,n_2$ such that $p^q-1 = 2^qn_1 + 3^qn_2$, and define $\mbox{\boldmath $\sigma$} =(2,\ldots,2,3\ldots,3)$, where the run of $2$'s is of length $n_1$, and the run of $3$'s is of length $n_2$.
Assume by way of contradiction that $\norm{k\mbox{\boldmath $\sigma$}\mod p}_q \le \norm{\mbox{\boldmath $\sigma$} \mod p}_q$. Denote $m_1 = \abs{2k}_p,m_2 = \abs{3k}_p$.
\[ 2^qn_1 + 3^qn_2 = p^q-1 = {(\norm{\mbox{\boldmath $\sigma$} \mod p}_q)}^q \geq (\norm{k \cdot \mbox{\boldmath $\sigma$} \mod p}_q)^{q} = n_1m_1^q + n_2m_2^q,\]
so $m_1 \le 2$ or $m_2 \le 3$.  
Since $m_1=2$ if and only if $k\equiv \pm 1 \mod p$ and $m_2=3$ if and only if $k \equiv \pm 1 \mod p$, we conclude that $m_1 < 2$ or $m_2 <3$. So, $m_1 = 1$ or $m_2 =  1$ or $m_2 =  2$, and we examine each one of the cases:

\begin{itemize}
\item $m_1 =  1$, hence $2k = \pm 1$ and $k = \frac{p\pm 1}{2}$, so $3k \equiv \frac{3p \pm 3}{2} \equiv \pm \frac{p + 3}{2}$, and $\abs{3k}_p = m_2 = \frac{p-3}{2}$.
\item $m_2 =  1$, hence $3k = \pm 1$, and
\begin{itemize}
\item If $p \equiv 1 \mod 3$, $k = \pm \frac{2p+1}{3}$, and $2k = \pm \frac{4p+2}{3} \equiv \pm \frac{p+2}{3}$, and $m_1 = \abs{2k}_p = \frac{p+2}{3}$.
\item If $p \equiv 2 \mod 3$, $k = \pm \frac{p+1}{3}$, and $2k = \pm \frac{2p+2}{3} \equiv \mp \frac{p-2}{3}$.
\end{itemize}
\item $m_2 =  2$, hence $3k = \pm 2$, and
\begin{itemize}
\item If $p \equiv 1 \mod 3$, $k = \pm \frac{p+2}{3}$, and $2k = \pm \frac{2p+4}{3} \equiv \mp \frac{p-4}{3}$.
\item If $p \equiv 2 \mod 3$, $k = \pm \frac{2p+2}{3}$, and $2k = \pm \frac{4p+4}{3} \equiv \mp \frac{p+4}{3}$.
\end{itemize}
\end{itemize}
In particular in each of the cases
$ p^q > p^q -1 \geq n_1m_1^q + n_2m_2^q \geq 2\cdot 3^q \cdot (\frac{p-4}{3})^q=2(p-4)^q$
which is a contradiction for large enough $p$.

\end{proof}

\begin{remark} \label{remark:n and p}
In this construction, $n = 1 + n_1 + n_2$. Since $2^qn_1+3^qn_2 = p^q-1$, $ n \geq 1 + \frac{p^q-1}{3^q}>\frac{p}{2}+\frac{3}{4}$
for large enough $p$ as long as $q>1$, and so if $q>1$, $4n-3>2p$.
\end{remark}

Moreover, for the $q=2$ case, we are looking for a prime $p$ such that $p^2-1=4n_1+9n_2$ and $n_1,n_2 \geq 18$. As $17^2-1=288=13\cdot 4 + 19 \cdot 9$, $n=13+19+1=34$ is the dimension of the counter-example.
Actually, a smaller "short vector modulo prime" exists, for $p=13$. The vector
\[  \mbox{\boldmath $\sigma$} = (2, 2, 2, 2, 2, 2, 2, 2, 2, 2, 2, 2, 2, 2, 2, 3, 3, 3, 3, 3, 3, 3, 3, 3, 3, 3, 3) = (\bb 2^{15},\bb 3^{12}) \]
and hence the dimension of the counter-example lattice is $n=28$.
Ignoring the 2-3 limitation, we used a random algorithm for finding a fit and short as possible vector. The vector $(\bb 2^3,\bb 3^3,\bb 4^1,\bb 5^4,\bb 6^3,\bb 7^2)$ over $p=19$ is such a vector, and gives a construction of dimension $18$. Proving this fact will remain as an exercise to the reader.
\newpage

\bibliography{references}

\end{document}